\newtheorem{theorem}{Theorem}[section]
\newtheorem*{theorem*}{Main Theorem}
\newtheorem{proposition}[theorem]{Proposition}
\newtheorem{lemma}[theorem]{Lemma}
\newtheorem{corollary}[theorem]{Corollary}
\theoremstyle{definition} 
\newtheorem{definition}[theorem]{Definition} 
\newtheorem{remark}[theorem]{Remark}
\let\c@equation\c@theorem
\numberwithin{equation}{section}
\newcommand{\lra}{\longrightarrow}
\newcommand{\Cat}{\mathbf{Cat}}
\newcommand{\Gray}{\operatorname{\mathbf{Gray}}}
\newcommand{\dbl}{\operatorname{\mathbf{Dbl}}}
\newcommand{\psdbl}{\operatorname{\mathbf{PsDbl}}}
\newcommand{\twocat}{\mathbf{2}\text{-}\mathbf{Cat}}
\newcommand{\Set}{\operatorname{\mathbf{Set}}}
\newcommand{\Hom}{\mathrm{\mathbf{Hom}}}
\newcommand{\Ps}{\mathrm{\mathbf{Ps}}}
\newcommand{\Bicat}{\mathbf{Bicat}}
\newcommand{\st}{\mathrm{\mathbf{st}}}
\newcommand{\cd}[2][]{\vcenter{\hbox{\xymatrix#1{#2}}}}
\newcommand{\dtwocell}[3][0.5]{\ar@{}[#2] \ar@{=>}?(#1)+/u  0.2cm/;?(#1)+/d 0.2cm/^{#3}}
\newcommand{\ltwocell}[3][0.5]{\ar@{}[#2] \ar@{=>}?(#1)+/r 0.2cm/;?(#1)+/l 0.2cm/_{#3}}
\def\matrixobject@{%
  \edef \next@{={\DirectionfromtheDirection@ }}%
  \expandafter \toks@ \next@ \plainxy@
  \let\xy@@ix@=\xyq@@toksix@
  \xyFN@ \OBJECT@}
\let\xy@entry@@norm=\entry@@norm
\def\entry@@norm@patched{%
  \let\object@=\matrixobject@
  \xy@entry@@norm }
\newcommand{\hdash}{\rotatebox[origin=c]{90}{$\vdash$}}
\newcommand{\vcong}{\rotatebox[origin=c]{-90}{$\cong$}}
\title{How strict is strictification?}
\author{Alexander Campbell}
\address{Centre of Australian Category Theory, Macquarie University, NSW 2109, Australia}
\email{alexander.campbell@mq.edu.au}
\subjclass[2010]{18D05, 18D20}
\date{5 October 2018}
\begin{document}

\begin{abstract}
The subject of this paper is the higher structure of the strictification adjunction, which relates the two fundamental bases of three-dimensional category theory:\ the $\Gray$-category of $2$-categories and the tricategory of bicategories. We show that -- far from requiring the full weakness provided by the definitions of tricategory theory -- this adjunction can be \emph{strictly} enriched over the symmetric closed multicategory of bicategories defined by Verity. Moreover, we show that this adjunction underlies an adjunction of bicategory-enriched symmetric multicategories.  
 An appendix introduces the symmetric closed multicategory of pseudo double categories, into which Verity's symmetric multicategory of bicategories embeds fully.
\end{abstract}

\maketitle

\tableofcontents

\section{Introduction} \label{intro}
The fundamental coherence theorem for bicategories (see \cite[\S 1.4]{MR1261589}) states that every\- bicategory is biequivalent to a $2$-category (that is, a category enriched over the cartesian closed category $\Cat$ of categories). Moreover, as is shown by the extensive literature on two-dimensional category theory, 
the \emph{category theory} of bicategories can be modelled by $2$-category theory, which Lack \cite[\S 1.1]{MR2664622} describes as follows:
\begin{quote}
$2$-category theory is a ``middle way'' between $\Cat$-category theory and bicategory theory. It \emph{uses} enriched category theory, but not in the simple minded way of $\Cat$-category theory; and it cuts through some of the technical nightmares of bicategories.
\end{quote}
This could also be described as ``homotopy coherent $\Cat$-enriched category theory'' (cf.\ \cite{MR1376543}), that is category theory enriched over the base $\Cat$ not merely as a monoidal category, but as a monoidal category with inherent higher structure, as might be realised by considering $\Cat$ as a monoidal model category.

For example, consider the bicategorical analogue of the presheaf category over a small \linebreak$2$-category $\mathcal{C}$, which is the $2$-category $\Hom(\mathcal{C}^\text{op},\Cat)$ whose objects are pseudofunctors from $\mathcal{C}^\text{op}$ to $\Cat$ and whose morphisms are pseudonatural transformations between them. It follows from two-dimensional monad theory \cite{MR1007911,MR2369168} that this $2$-category is related to the $\Cat$-enriched presheaf category $[\mathcal{C}^\text{op},\Cat]$, whose objects and morphisms are $2$-functors and $2$-natural transformations, by a $2$-adjunction
\begin{equation*} 
\xymatrix{
[\mathcal{C}^\text{op},\Cat] \ar@<-1.5ex>[rr]^-{\hdash}_-{} && \ar@<-1.5ex>[ll]_-{} \Hom(\mathcal{C}^\text{op},\Cat)
}
\end{equation*}
that restricts to a biequivalence between $\Hom(\mathcal{C}^\text{op},\Cat)$ and the full sub-$2$-category of  $[\mathcal{C}^\text{op},\Cat]$ on the cofibrant objects for the projective model structure. This $2$-adjunction is used to show that $\Cat$-enriched (co)limits weighted by projective cofibrant weights model bicategorical (co)limits \cite{MR0401868,MR998024,MR2431638}.

One dimension higher, the coherence theorem of Gordon, Power, and Street \cite{MR1261589} states that every tricategory is triequivalent to a $\Gray$-category (that is, a category enriched over Gray's symmetric monoidal closed structure on the category $\twocat$ of $2$-categories and $2$-functors \cite{MR0371990,MR0412252}). Moreover, the \emph{category theory} of tricategories can be modelled by ``homotopy coherent $\Gray$-enriched category theory''. (Note that $\twocat$ is a monoidal model category with the Gray monoidal structure \cite{MR1931220,MR2138540}.) 
However, the relationship between these two theories is more complicated than the relationship one dimension lower. For whereas $\Cat$-category theory and bicategory theory share the common base $\Cat$, the bases of $\Gray$-category theory and tricategory theory, i.e.\ the $\Gray$-category of $2$-categories and the tricategory of bicategories, are distinct \cite{MR2276246}.
Hence a study of the relationship between these two theories must involve a study of the relationship between their bases. 

The underlying categories of these two bases are related by the \emph{strictification adjunction},
\begin{equation} \label{stadj}
\cd{
\twocat \ar@<-1.5ex>[rr]^-{\hdash}_-{} && \ar@<-1.5ex>[ll]_-{\st} \Bicat
}
\end{equation}
whose right adjoint is the inclusion of $\twocat$ into the category $\Bicat$ of bicategories and pseudofunctors, and whose left adjoint sends a bicategory to its strictification (to which it is biequivalent) \cite[\S 4.10]{MR1261589}.  In this paper we study the higher structure of this adjunction. Our fundamental result (Corollary \ref{3duniprop2}), from which our main theorem (stated below) follows immediately, is the \emph{three-dimensional universal property of strict\-ification}, which states that for every bicategory $A$ and $2$-category $B$, the hom-set bijection $\twocat(\st A,B) \cong \Bicat(A,B)$ of the strictification adjunction underlies an isomorphism of $2$-categories
\begin{equation} \label{funiso}
\mathbf{Ps}(\st A,B) \cong \mathbf{Hom}(A,B),
\end{equation}
where $\mathbf{Ps}(-,-)$ and $\mathbf{Hom}(-,-)$ denote the homs of the $\Gray$-category of $2$-categories and the tricategory of bicategories respectively (whose morphisms are $2$-functors and pseudofunctors respectively, and whose $2$-cells and $3$-cells are in both cases pseudonatural transformations and modifications).

This higher universal property suggests that the strictification adjunction underlies some kind of ``three-dimensional adjunction''. Working in the setting of tricategory theory, one can indeed show that the strictification adjunction underlies a triadjunction between the tricategories of $2$-categories and bicategories. Note that to have such a triadjunction it would suffice for (\ref{funiso}) to be a biequivalence; yet despite the strength of (\ref{funiso}) being in fact an isomorphism, the ``weakness'' provided by the definitions of tricategory theory is still required in order to realise the higher structure of the strictification adjunction in this setting, for neither the tricategory of bicategories nor the strictification trihomomorphism is strict.

In this paper, however, we work within an alternative framework for bicategory-enriched categories,\footnote{By which we mean enriched categories whose hom-objects are bicategories, and not categories enriched over a bicategory in the sense of \cite{MR647583,MR2219705}.} in which we show that the same three-dimensional higher structure of the strictification adjunction can be realised by a \emph{strictly} bicategory-enriched adjunction. This is the framework of enrichment over the \emph{symmetric closed multicategory} ${\sf Bicat}$ of bicategories introduced by Verity \cite[\S 1.3]{MR2844536}. Note that in this setting the category of bicategories is a \emph{strictly} bicategory-enriched category, in contrast to the setting of tricategory theory where it is merely ``weakly'' enriched.

We summarise the central argument of this paper as follows. Using standard arguments of enriched category theory generalised in \S \ref{adjmultsect} to the context of enrichment of and over symmetric multicategories, in \S \ref{stadjsect} we prove our main theorem (Theorem \ref{fincor}) as a formal consequence of the three-dimensional universal property of strictification (\ref{funiso}): 
\begin{theorem*}
The strictification adjunction {\normalfont(\ref{stadj})} underlies an adjunction of ${\sf Bicat}$-enriched categories and, moreover,  an adjunction of ${\sf Bicat}$-enriched symmetric multicategories.
\end{theorem*}

As an application of the strictness of strictification revealed by this main theorem, we obtain (Proposition \ref{newgray}) a hitherto undiscovered \emph{$\Gray$-category of bicategories}, whose underlying category is the category $\Bicat$ of bicategories and pseudofunctors, and whose hom $2$-categories $\st\,\Hom(A,B)$ are the strict\-ifications of the hom bicategories $\Hom(A,B)$. This $\Gray$-category is triequivalent (via a bijective-on-objects, bijective-on-morphisms trihomomorphism) to the tricategory of bicategories.

For the remainder of this section, let us explain in detail why it is that the category of bicategories (and hence ultimately the strictification adjunction) can be strictly enriched over the multicategory ${\sf Bicat}$, while the tricategory of bicategories fails to be strict. The reason is that composition, specifically \emph{horizontal composition of $2$-cells}, is encoded in different ways in tricategories and ${\sf Bicat}$-enriched categories. This difference is a result of the difference between the cartesian monoidal category $\Bicat$ and the symmetric multicategory  ${\sf Bicat}$, for in a tricategory, composition is given by pseudofunctors out of cartesian products of bicategories as on the left below,
\begin{equation*}
\Hom(B,C) \times \Hom(A,B) \lra \Hom(A,C) \qquad (\Hom(B,C),\Hom(A,B)) \lra \Hom(A,C)
\end{equation*} 
whereas in a ${\sf Bicat}$-enriched category, composition is given by ``two-variable pseudofunctors'', i.e.\ binary morphisms in the multicategory ${\sf Bicat}$, as on the right above; more generally, composition in a category enriched over a multicategory is given by such binary morphisms \cite{MR0242637,MR0288162}. (Note that we use different fonts to distinguish between these two structures: the boldface $\Bicat$ denotes the cartesian monoidal category, whereas the sans-serif $\sf{Bicat}$ denotes the multicategory.)

Recall that in a $2$-category (and more generally in a bicategory) the operation of horizontal composition of $2$-cells can be derived from the operations of vertical composition of $2$-cells and whiskering of $2$-cells by morphisms on either side, as in the equations displayed below.
\begin{equation} \label{cinq}
\cd{
A \ar@/^.75pc/[r]^-f  \ar@/_.75pc/[r]_-g \dtwocell{r}{\alpha} & B \ar[r]^-h \ar@{}[d]|-{\circ} & C\\
A \ar[r]_-g & B \ar@/^.75pc/[r]^-h  \ar@/_.75pc/[r]_-k \dtwocell{r}{\beta} & C
}
\quad
=
\quad
\cd{
A \ar@/^.75pc/[r]^-f  \ar@/_.75pc/[r]_-g \dtwocell{r}{\alpha} & B \ar@/^.75pc/[r]^-h  \ar@/_.75pc/[r]_-k \dtwocell{r}{\beta} & C
}
\quad
=
\quad
\cd{
A \ar[r]^-f & B \ar@/^.75pc/[r]^-h  \ar@/_.75pc/[r]_-k \dtwocell{r}{\beta} \ar@{}[d]|-{\circ} & C \\
A \ar@/^.75pc/[r]^-f  \ar@/_.75pc/[r]_-g \dtwocell{r}{\alpha} & B \ar[r]_-k & C
}
\end{equation}
Hence a $2$-category can be defined in such a way that vertical composition of $2$-cells and whiskering are primitive operations satisfying Godement's \emph{cinq r{\`e}gles} \cite[Appendice \S1]{MR0102797}, one of which states that the left-hand and right-hand sides of (\ref{cinq}) are equal, and such that horizontal composition of $2$-cells is a derived operation defined by (\ref{cinq}) (see for instance \cite[\S 2]{MR1421811}). This amounts to defining a $2$-category as a category enriched over the multicategory $\sf{Cat}$ of categories, in which a binary morphism $F \colon (A,B) \lra C$ consists of:
\begin{enumerate}[(i)]
\item a function $F \colon \mathrm{ob}A \times \mathrm{ob} B \lra \mathrm{ob}C$,
\item for each object $a \in A$, a functor $F(a,-) \colon B \lra C$ agreeing with the function (i) on objects,
\item for each object $b \in B$, a functor $F(-,b) \colon A \lra C$ agreeing with the function (i) on objects,
\end{enumerate}
\nopagebreak[2]
such that for each pair of morphisms $f \colon a \lra a'$ in $A$ and $g \colon b \lra b'$ in $B$, the following square commutes.
\begin{equation*} 
\cd[@C=2.5em]{
F(a,b) \ar[r]^-{F(a,g)} \ar[d]_-{F(f,b)} \ar@{}[dr]|-{=} & F(a,b') \ar[d]^-{F(f,b')} \\
F(a',b) \ar[r]_-{F(a',g)} & F(a',b')
}
\end{equation*}
It is well known that such binary morphisms are in bijection with functors $F \colon A \times B \lra C$ from the cartesian product of categories (see \cite[Theorem 5.2]{MR0013131}), and indeed the cartesian monoidal category $\Cat$ \emph{represents} the multicategory $\sf{Cat}$ (in the sense of \cite{MR1758246}). Hence a $2$-category can be equivalently defined either as a category enriched over the cartesian monoidal category $\Cat$ or as a category enriched over the multicategory $\sf{Cat}$, in which cases horizontal composition of $2$-cells is either a  primitive or a derived operation. 

Similarly, in a category enriched over the cartesian monoidal category $\Bicat$, horizontal composition of $2$-cells is a primitive operation, whereas in a category enriched over Verity's multicategory $\sf{Bicat}$ it is not, and indeed in a $\sf{Bicat}$-enriched category there is no canonical operation of horizontal composition of $2$-cells. For a binary morphism $F \colon (A,B) \lra C$ in ${\sf Bicat}$ consists of:
\begin{enumerate}[(i)]
\item a function $F \colon \mathrm{ob}A \times \mathrm{ob} B \lra \mathrm{ob}C$,
\item for each object $a \in A$, a pseudofunctor $F(a,-) \colon B \lra C$ agreeing with the function (i) on objects,
\item for each object $b \in B$, a pseudofunctor $F(-,b) \colon A \lra C$ agreeing with the function (i) on objects,
\item for each pair of morphisms $f \colon a \lra a'$ in $A$ and $g \colon b \lra b'$ in $B$, an invertible $2$-cell $F(f,g)$ in $C$ as displayed below,
\begin{equation*} 
\cd[@C=2.5em]{
F(a,b) \ar[r]^-{F(a,g)} \ar[d]_-{F(f,b)} \dtwocell{dr}{F(f,g)} & F(a,b') \ar[d]^-{F(f,b')} \\
F(a',b) \ar[r]_-{F(a',g)} & F(a',b')
}
\end{equation*}
\end{enumerate}
subject to axioms (formally identical, when presented as pasting equations, to those of a cubical functor of two variables; see for instance \cite[\S4.2]{MR1261589}).
Hence for each ``horizontally composable'' pair of $2$-cells $\alpha$ and $\beta$ in a $\sf{Bicat}$-category, there is a specified invertible $3$-cell 
\begin{equation} \label{cinqgray}
\cd{
A \ar@/^.75pc/[r]^-f  \ar@/_.75pc/[r]_-g \dtwocell{r}{\alpha} & B \ar[r]^-h \ar@{}[d]|-{\circ} & C\\
A \ar[r]_-g & B \ar@/^.75pc/[r]^-h  \ar@/_.75pc/[r]_-k \dtwocell{r}{\beta} & C
}
\quad
\cong
\quad
\cd{
A \ar[r]^-f & B \ar@/^.75pc/[r]^-h  \ar@/_.75pc/[r]_-k \dtwocell{r}{\beta} \ar@{}[d]|-{\circ} & C \\
A \ar@/^.75pc/[r]^-f  \ar@/_.75pc/[r]_-g \dtwocell{r}{\alpha} & B \ar[r]_-k & C
}
\end{equation}
in place of the equality of Godement's rule (\ref{cinq}), and neither its source nor its target $2$-cell has a stronger claim than the other to be the horizontal composite of $\alpha$ and $\beta$; note that the situation is precisely the same in a $\Gray$-category. This phenomenon can be observed in the composition of pseudonatural transformations: given a pair of  pseudonatural transformations $\alpha$ and $\beta$ as in (\ref{cinqgray}), there is a canonical invertible modification $\beta g \circ h \alpha \lra k \alpha \circ \beta f$ whose component at an object $a \in A$ is given by the pseudonaturality constraint for $\beta$ at the morphism $\alpha_a \colon fa \lra ga$ of $B$, as displayed below.
\begin{equation}  \label{canmod}
\cd{
hfa \ar[r]^-{h\alpha_a} \ar[d]_-{\beta_{fa}} \dtwocell{dr}{\beta_{\alpha_a}} & hga \ar[d]^-{\beta_{ga}} \\
kfa \ar[r]_-{k\alpha_a} & kga
}
\end{equation}

Therefore to define an enrichment of the category of bicategories over the cartesian monoidal category $\Bicat$ with enriched hom-objects $\Hom(A,B)$, a definition of horizontal composition for pseudonatural transformations must be chosen, and this will inevitably fail to be strictly associative (see \cite[Proposition 5.3]{MR3076451} for an invertible icon witnessing this failure); one has at best a weak enrichment over the cartesian monoidal $2$-category $\Bicat_2$ of bicategories, pseudofunctors, and  icons (see \cite[\S6]{MR2871166} and \cite[\S4.1]{MR3294506}). On the other hand, since horizontal composition of $2$-cells is not a primitive operation in a $\sf{Bicat}$-category, no such choice need be made in order to define a ${\sf Bicat}$-enrichment of the category of bicategories, and indeed the desired (strict!) enrichment follows formally from the existence of the closed structure of the multicategory $\sf{Bicat}$.

Similar remarks can be made about the higher structure of the strictification functor  $\st \colon \Bicat \lra \twocat$. An extension of the strictification functor  to a trihomomorphism between tricategories was defined in \cite[\S5.6]{MR1261589} and in \cite[\S8.2]{MR3076451}, and by the latter definition it is clear that $\st$ can be extended to a $\Bicat_2$-enriched pseudofunctor from the $\Bicat_2$-enriched bicategory of bicategories mentioned in the previous paragraph; note that neither its unit constraint nor its composition constraint is an identity. However, as we shall prove in Theorem \ref{fincor}, the strictification functor can be \emph{strictly} enriched over the multicategory $\sf{Bicat}$. Once again, this difference of behaviour is due to the different encodings of horizontal composition of $2$-cells. For a $\Bicat$-enriched functor must preserve horizontal composition of $2$-cells and the identity $2$-cells of identity morphisms, which $\st$ fails to do, whereas a $\sf{Bicat}$-enriched functor need only preserve the canonical isomorphisms of (\ref{cinqgray}), which $\st$ does.

In summary, although the standard definitions of tricategory theory (see for instance \cite{MR1261589,MR3076451}) use the cartesian product of bicategories,  we argue that -- in order to work ``as strictly as possible'' (cf.\ \cite{MR1935980}) -- Verity's multicategory is the more suitable base of enrichment for studying the higher structure of the category of bicategories and the strictification adjunction. For whereas the category of bicategories and the strictification functor are only ``weakly'' enriched over the cartesian monoidal structure, we show that they can be \emph{strictly} enriched over the multicategory structure. Moreover, whereas the strictification functor is only ``weakly'' monoidal with respect to the cartesian product of bicategories and the symmetric Gray tensor product of \linebreak$2$-categories (see \cite{MR3023916}), we show that it underlies a (strict) multifunctor from the multicategory ${\sf Bicat}$ of bicategories to the multicategory ${\sf Gray}$ of $2$-categories represented by the symmetric Gray monoidal structure.

The source of this difference is the failure of the cartesian monoidal structure to represent the multicategory structure; in particular, pseudofunctors $A \times B \lra C$ are not generally in bijection with two-variable pseudofunctors $(A,B) \lra C$. In Appendix \ref{appendix} we reconcile these two structures by showing that there is an enrichment of $\sf{Bicat}$ to a $2$-multicategory, whose $2$-cells are multivariable icons, which is \emph{birepresented} (i.e.\ represented up to equivalence) by the cartesian monoidal $2$-category $\Bicat_2$ (see Theorem \ref{birep}).
For this and other purposes it is useful to work with the larger symmetric closed multicategory $\sf{PsDbl}$ of pseudo double categories, which contains $\sf{Bicat}$ as a full sub-multicategory, and which we introduce in Appendix \ref{appendix}. Nevertheless, our primary objects of interest remain bicategories; the main benefit of working at this greater level of generality is natural access to the hom pseudo double categories $\underline{\Hom}(A,B)$, which supplement the usual hom bicategories $\Hom(A,B)$ with the ever-useful icons \cite{MR2640216}.

\subsection*{Acknowledgements}
The support of Australian Research Council Future Fellowship \linebreak FT160100393 is gratefully acknowledged. The author thanks Richard Garner and an anonymous referee for their helpful comments on earlier drafts of this paper.

\section{Adjunction for enriched multicategories} \label{adjmultsect}
The purpose of this section is to provide the means by which we will deduce the enrichment of the strictification adjunction (\ref{stadj}) over the symmetric multicategory of bicategories from the three-dimensional universal property of strictification (\ref{funiso}). The main results of this section are all instances of the fundamental categorical principle that \emph{universality begets functoriality}, whose most basic instance is the standard result that a functor $T \colon \mathcal{A} \lra \mathcal{B}$ has a left adjoint if and only if the functor $\mathcal{B}(B,T-) \colon \mathcal{A} \lra \Set$ is representable for each object $B \in \mathcal{B}$. We first generalise this standard result to adjunctions of enriched symmetric multicategories (Lemma \ref{adjfunlemma}), and then apply this generalisation to prove, for each symmetric multifunctor $T \colon {\sf V} \lra {\sf W}$ between symmetric closed multicategories, a necessary and sufficient condition (Theorem \ref{adjfunthm}) for  the induced symmetric ${\sf W}$-multifunctor $\widehat{T} \colon T_{\ast}\underline{\sf V} \lra \underline{\sf W}$ to have a left adjoint. In the following section we will apply this theorem to the inclusion ${\sf Gray} \lra {\sf Bicat}$ of the symmetric closed multicategory of $2$-categories (represented by the symmetric closed monoidal category $\Gray$) into the symmetric closed multicategory of bicategories to prove our main theorem. To begin, we recall the relevant basic theory of enrichment of and over symmetric multicategories.

Although enrichment over a symmetric multicategory generalises enrichment over a symmetric monoidal category, one can work over a symmetric multicategory as one does over a symmetric strict monoidal category  with the help of the following construction. The \emph{symmetric monoidal envelope}\footnote{This name is given in \cite[\S2.2.4]{lurieha} to the corresponding $\infty$-categorical construction.} of a symmetric multicategory ${\sf V}$ is the symmetric strict monoidal category $\mathbb{F}({\sf V})$ whose objects $(X_1,\ldots,X_n)$ are words of objects of ${\sf V}$ of length $n \geq 0$, and in which a morphism $(X_1,\ldots,X_n) \lra (Y_1,\ldots,Y_m)$ consists of a function $\varphi \colon \{1,\ldots,n\} \lra \{1,\ldots,m\}$ and a multimorphism $(X_i)_{i \in \varphi^{-1}(j)} \lra Y_j$ in ${\sf V}$ for each $1 \leq j \leq m$; composition in $\mathbb{F}({\sf V})$ is defined using the symmetric multicategory structure of ${\sf V}$, and the tensor product and symmetry are given by concatenation and permutation of words. This construction defines a functor from the category of symmetric multicategories and symmetric multifunctors to the category of symmetric strict monoidal categories (a.k.a.\ permutative categories) and symmetric strict monoidal functors, which is left adjoint to the functor that sends a symmetric strict monoidal category $\mathcal{V}$ to the symmetric multicategory with the same objects as $\mathcal{V}$ and  whose multimorphisms $(X_1,\ldots,X_n) \lra Y$ are morphisms $X_1 \otimes \cdots \otimes X_n \lra Y$ in $\mathcal{V}$ \cite[Proposition 4.2]{MR2558315}.

The non-symmetric version of this construction, namely the monoidal envelope of a multicategory, was defined in \cite{MR0288162}, where it was used (among other things) to define the notions of category, functor, and natural transformation enriched over a multicategory in terms of the same notions enriched over a strict monoidal category. We make the analogous definitions in the symmetric case (cf.\ \cite[\S 3]{MR2523451}). 

\begin{definition}
Let ${\sf V}$ be a symmetric multicategory. A \emph{${\sf V}$-category}  is an $\mathbb{F}({\sf V})$-category $\mathcal{A}$ each of whose hom-objects $\mathcal{A}(A,B) \in \mathbb{F}({\sf V})$ is a word of length $1$. The $2$-category ${\sf V}\text{-}\Cat$ of  ${\sf V}$-categories, \emph{${\sf V}$-functors}, and \emph{$\sf{V}$-natural transformations} is the full sub-$2$-category of $\mathbb{F}({\sf V})\text{-}\Cat$ on the ${\sf V}$-categories.
An \emph{adjunction of ${\sf V}$-categories} is an adjunction in the $2$-category ${\sf V}\text{-}\Cat$.

A \emph{symmetric ${\sf V}$-multicategory} is a symmetric $\mathbb{F}({\sf V})$-multicategory ${\sf A}$ each of whose hom-objects ${\sf A}(A_1,\ldots,A_n;B) \in \mathbb{F}({\sf V})$ is a word of length $1$ (for $n \geq 0$). The $2$-category  ${\sf V}\text{-}\mathbf{SMult}$ of symmetric  ${\sf V}$-multicategories, \emph{symmetric ${\sf V}$-multifunctors}, and \emph{${\sf V}$-multinatural transformations} is the full sub-$2$-category of $\mathbb{F}({\sf V})\text{-}\mathbf{SMult}$ on the symmetric ${\sf V}$-multicategories. 
An \emph{adjunction of symmetric ${\sf V}$-multicategories} is an adjunction in the $2$-category ${\sf V}\text{-}\mathbf{SMult}$.
\end{definition}

Since each symmetric multifunctor $T \colon {\sf V} \lra {\sf W}$ induces a symmetric strict monoidal functor $\mathbb{F}(T) \colon \mathbb{F}({\sf V}) \lra \mathbb{F}({\sf W})$ that preserves the lengths of objects, the change of base $2$-functor along $\mathbb{F}(T)$ restricts to a $2$-functor $T_{\ast} \colon {\sf V}\text{-}\mathbf{SMult} \lra {\sf W}\text{-}\mathbf{SMult}$, which we call the \emph{change of base  $2$-functor along $T$}. Note that this $2$-functor sends adjunctions of symmetric $\sf {V}$-multicategories to adjunctions of symmetric ${\sf W}$-multicategories. Explicitly, for each symmetric ${\sf V}$-multicategory ${\sf A}$, $T_{\ast}{\sf A}$ is a symmetric ${\sf W}$-multicategory with the same objects as ${\sf A}$ and with hom-sets  $(T_{\ast}{\sf A})(A_1,\ldots,A_n;B) = T{\sf A}(A_1,\ldots,A_n;B)$.
 In particular, change of base along the symmetric multifunctor $V \colon {\sf V} \lra {\sf Set}$ that sends an object $X$ of ${\sf V}$ to its underlying set $VX = {\sf V}(\,\,;X)$ defines a $2$-functor $V_{\ast} \colon {\sf V}\text{-}\mathbf{SMult} \lra \mathbf{SMult}$ that sends a symmetric ${\sf V}$-multicategory to its underlying symmetric multicategory.

Each symmetric closed multicategory ${\sf V}$, with internal hom objects $[X,Y]$, admits a canonical self-enrichment  to a symmetric ${\sf V}$-multicategory $\underline{\sf V}$, whose objects are those of ${\sf V}$ and whose hom-objects are defined recursively as $\underline{{\sf V}}(\,\,;Y) = Y$ for $n=0$, and as 
\begin{equation*}
\underline{{\sf V}}(X_1,\ldots,X_n;Y) = \underline{{\sf V}}(X_1,\ldots,X_{n-1};[X_n,Y])
\end{equation*}
 for $n \geq 1$. Moreover,  each symmetric multifunctor $T \colon {\sf V} \lra {\sf W}$ between symmetric closed multicategories induces a symmetric ${\sf W}$-multifunctor $\widehat{T} \colon  T_{\ast}\underline{\sf V} \lra \underline{\sf W}$ that agrees with $T$ on objects and is defined on hom-objects recursively as the identity on $T\underline{{\sf V}}(\,\,;X) = TX = \underline{{\sf W}}(\,\,;TX)$ for $n=0$, and as the composite
\footnotesize
\begin{equation*}
\cd[@C=2em]{
T\underline{{\sf V}}(X_1,\ldots,X_{n-1};[X_n,Y]) \ar[r]^-{\widehat{T}} & \underline{{\sf W}}(TX_1,\ldots,TX_{n-1};T[X_n,Y]) \ar[rr]^-{\underline{{\sf W}}((1);\psi)} && \underline{{\sf W}}(TX_1,\ldots,TX_{n-1};[TX_n,TY])
}
\end{equation*}
\normalsize
for $n \geq 1$,
where $\psi \colon T[X,Y] \lra [TX,TY]$ corresponds under the canonical bijection 
\begin{equation*}
{\sf W}(T[X,Y];[TX,TY]) \cong {\sf W}(T[X,Y],TX;TY)
\end{equation*}
to the image under $T$ of the evaluation morphism $\mathrm{ev} \colon ([X,Y],X) \lra Y$ in ${\sf V}$. (See \cite[Chapter 4]{MR2513383} for further details of these constructions.)

The proof of the main theorem of this section (Theorem \ref{adjfunthm}) uses the following lemma, which generalises to adjunctions of enriched symmetric multicategories the standard categorical result  (see for instance \cite[Theorem IV.1.2]{MR1712872}) that to give a left adjoint to a functor $T \colon \mathcal{A} \lra \mathcal{B}$ is precisely to give, for each object $B \in \mathcal{B}$, a representation of the functor $\mathcal{B}(B,T-) \colon \mathcal{A} \lra \Set$. The latter amounts by the Yoneda lemma to an object $SB \in \mathcal{A}$ and a morphism $\eta_B \colon B \lra TSB$ in $\mathcal{B}$ with the universal property that the composite function
\begin{equation*}
\cd[@=3em]{
\mathcal{A}(SB,A) \ar[r]^-T & \mathcal{B}(TSB,TA) \ar[r]^-{\mathcal{B}(\eta_B,1)} & \mathcal{B}(B,TA)
}
\end{equation*}
is a bijection for each object $A \in \mathcal{A}$.

\begin{lemma} \label{adjfunlemma}
Let ${\sf V}$ be a symmetric multicategory and let $T \colon \sf{A} \lra \sf{B}$ be a symmetric  \linebreak$\sf{V}$-multifunctor between symmetric $\sf{V}$-multicategories. If for each object $B\in \sf{B}$ there exists an object $SB\in \sf{A}$ and a morphism $\eta_B \colon B \lra TSB$ in $\sf{B}$ such that the composite morphism 
\begin{equation} \label{kellyn}
\cd[@C=2.5em]{
{\sf A} (SB_1,\dots,SB_n;A) \ar[r]^-T & {\sf B}(TSB_1,\ldots,TSB_n;TA) \ar[rr]^-{{\sf B}(\eta_{B_1},\ldots,\eta_{B_n};1)} && {\sf B}(B_1,\ldots,B_n;TA)
}
\end{equation}
is an isomorphism in ${\sf V}$ for each $n \geq 0$ and objects $B_1,\ldots,B_n \in \sf{B}$ and $A \in \sf{A}$, then this data extends uniquely to an adjunction of symmetric $\sf{V}$-multicategories $S \dashv T \colon \sf{A} \lra \sf{B}$ with unit $\eta$.
\end{lemma}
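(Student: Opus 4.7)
The plan is to adapt the classical Yoneda-style argument reviewed just before the lemma to the enriched, multicategorical setting. The hypothesis (\ref{kellyn}) says, in effect, that $(SB,\eta_B)$ represents the multi-hom ${\sf B}(B_1,\ldots,B_n;T-) \colon {\sf A} \lra {\sf V}$ at each arity $n \geq 0$, with the representation realised by isomorphisms of hom-objects in ${\sf V}$ (not merely of underlying sets); every piece of the adjunction will then be forced by uniqueness.

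First I would extend $S$ from a function on objects to a symmetric ${\sf V}$-multifunctor. For each tuple $B_1,\ldots,B_n,B$ of objects of ${\sf B}$, define the arrow
\[
S \colon {\sf B}(B_1,\ldots,B_n;B) \lra {\sf A}(SB_1,\ldots,SB_n;SB)
\]
in ${\sf V}$ to be the unique morphism whose composite with (\ref{kellyn}) at $A = SB$ equals ${\sf B}(\eta_{B_1},\ldots,\eta_{B_n};\eta_B \circ -)$, which is well-defined since (\ref{kellyn}) is an isomorphism. The verification that $S$ preserves identities, multicomposition, and symmetric group actions is then a standard ``cancel the iso'' argument: to check an equation between two arrows into ${\sf A}(SB_1,\ldots,SB_n;SB)$ it suffices to postcompose with (\ref{kellyn}) and reduce to an equation in ${\sf B}$, which holds by the definition of $S$, the symmetric ${\sf V}$-multifunctoriality of $T$, and naturality of $\eta$ in ${\sf B}$. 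The ${\sf V}$-multinaturality of $\eta$ is immediate from the defining equation of $S$.

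I would then produce the counit $\varepsilon_A \colon STA \lra A$ by applying the underlying-set inverse of (\ref{kellyn}) at $n = 1$ and $B_1 = TA$ to the identity morphism $1_{TA}$, obtaining the unique morphism with $T\varepsilon_A \circ \eta_{TA} = 1_{TA}$. Its ${\sf V}$-multinaturality and both triangle identities are verified by a further application of the same uniqueness principle. Uniqueness of the entire adjunction structure extending the given $\eta$ follows because the defining equation of $S$ on hom-objects is forced by requiring that $\eta$ be the unit and that $T$ preserves it.

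The principal obstacle is bookkeeping rather than mathematical depth: the hypothesis must be applied at every arity $n \geq 0$ simultaneously, and the axioms of a symmetric ${\sf V}$-multifunctor involve multi-substitutions at mixed arities, so one must keep careful track of which instance of (\ref{kellyn}) is invoked to resolve each uniqueness question. That the hypothesis is an isomorphism of hom-objects in ${\sf V}$ (rather than merely of underlying sets) is precisely what secures the ${\sf V}$-enrichedness of $S$; that it holds for every $n$ is what makes $S$ a multifunctor.
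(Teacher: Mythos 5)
Your proposal is essentially the paper's own proof: the paper defines $S$ on hom-objects as the composite of ${\sf B}((1);\eta)$ with the inverse of the isomorphism (\ref{kellyn}), verifies multifunctoriality, equivariance, and the naturality of $\eta$ by cancelling that isomorphism against diagrams in $\mathbb{F}({\sf V})$, and obtains $\varepsilon_A$ exactly as you do, as the unique morphism with $T\varepsilon_A\circ\eta_{TA}=1_{TA}$. One small correction to your defining equation: since (\ref{kellyn}) already incorporates precomposition with $\eta_{B_1},\ldots,\eta_{B_n}$, its composite with $S$ should be required to equal ${\sf B}(1,\ldots,1;\eta_B)$ (postcomposition with $\eta_B$ alone), not ${\sf B}(\eta_{B_1},\ldots,\eta_{B_n};\eta_B\circ-)$, which as written does not typecheck on its domain.
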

\begin{proof}
Let $N = N_{B_1,\ldots,B_n;A}$ denote the invertible morphism (\ref{kellyn}). We define the action of the symmetric ${\sf V}$-multifunctor $S \colon {\sf B} \lra {\sf A}$ on hom-objects to be the following composite morphism.
\begin{equation*}
\cd[@C=2.75em]{
{\sf B}(B_1,\ldots,B_n;C) \ar[r]^-{{\sf B}((1);\eta)} & {\sf B}(B_1,\ldots,B_n;TSC) \ar[r]^-{N^{-1}} & {\sf A}(SB_1,\ldots,SB_n;SC)
}
\end{equation*}
Preservation of composition by $S$ is proved by the commutativity of the following diagram in the symmetric monoidal envelope $\mathbb{F}({\sf V})$,
\tiny
\begin{equation*}
\cd[@C=3.29em]{
{\sf B}((C_k);D) \otimes \bigotimes_{k} {\sf B}((B_j^k);C_k) \ar[r]^-{\circ} \ar[d]_-{{\sf B}((1);\eta) \otimes 1} & {\sf B}((B_j^k);D) \ar[d]^-{{\sf B}((1);\eta)} \\
{\sf B}((C_k);TSD) \otimes \bigotimes_{k} {\sf B}((B_j^k);C_k) \ar[r]^-{\circ} \ar[dd]_-{N^{-1} \otimes 1} & {\sf B}((B_j^k);TSD) \ar[r]^-{N^{-1}} & {\sf A}((SB_j^k);SD) \\
& {\sf B}((TSC_k);TSD) \otimes \bigotimes_k {\sf B}((B_j^k);TSC_k) \ar[u]_-{\circ} \\
{\sf A}((SC_k);SD) \otimes \bigotimes_k {\sf B}((B_j^k);C_k) \ar[r]_-{1 \otimes \bigotimes {\sf B}((1);\eta)} & {\sf A}((SC_k);SD) \otimes\bigotimes_k {\sf B}((B_j^k);TSC_k) \ar[u]_-{T \otimes 1} \ar[r]_-{1 \otimes \bigotimes N^{-1}} & {\sf A}((SC_k);SD) \otimes\bigotimes_k {\sf A}((SB_j^k);SC_k) \ar[uu]_-{\circ}
}
\end{equation*}
\normalsize
whose upper region commutes by associativity of composition in ${\sf B}$, whose lower left region commutes by the diagram
\begin{equation*}
\cd[@C=5em]{
{\sf B}((C_k);TSD) \otimes \bigotimes_{k} {\sf B}((B_j^k);C_k) \ar[r]^-{\circ} & {\sf B}((B_j^k);TSD) \\
{\sf B}((TSC_k);TSD) \otimes \bigotimes_{k} {\sf B}((B_j^k);C_k) \ar[u]^-{{\sf B}((\eta);1) \otimes 1} \ar[r]^-{1 \otimes \bigotimes {\sf B}((1);\eta)} & {\sf B}((TSC_k);TSD) \otimes \bigotimes_k {\sf B}((B_j^k);TSC_k) \ar[u]_-{\circ} \\
{\sf A}((SC_k);SD) \otimes \bigotimes_k {\sf B}((B_j^k);C_k) \ar[r]_-{1 \otimes \bigotimes {\sf B}((1);\eta)} \ar[u]^-{T \otimes 1} & {\sf A}((SC_k);SD) \otimes\bigotimes_k {\sf B}((B_j^k);TSC_k) \ar[u]_-{T \otimes 1}
}
\end{equation*}
and whose lower right region commutes by the diagram
\footnotesize
\begin{equation*}
\cd{
{\sf B}((B_j^k);TSD) & {\sf B}((TSB_j^k);TSD) \ar[l]_-{{\sf B}((\eta);1)} & {\sf A}((SB_j^k);SD) \ar[l]_-{T} \\
\bullet \ar[u]^-{\circ} & {\sf B}((TSC_k);TSD) \otimes \bigotimes_k {\sf B}((TSB_j^k);TSC_k) \ar[u]_-{\circ} \ar[l]_-{1 \otimes \bigotimes {\sf B}((\eta);1)} \\
\bullet \ar[u]^-{T \otimes 1} & {\sf A}((SC_k);SD) \otimes \bigotimes_k {\sf B}((TSB_j^k);TSC_k) \ar[u]_-{T \otimes 1} \ar[l]^-{1 \otimes \bigotimes {\sf B}((\eta);1)} & {\sf A}((SC_k);SD) \otimes \bigotimes_k {\sf A}((SB_j^k);SC_k) \ar[uu]_-{\circ} \ar[l]^-{1 \otimes \bigotimes T}
}
\end{equation*}
\normalsize
where  the latter two diagrams commute by associativity of composition in ${\sf B}$, by functoriality of the tensor product in $\mathbb{F}({\sf V})$, and by ${\sf V}$-multifunctoriality of $T$.

Preservation of identities by $S$ is proved by the commutativity of the following diagram,
\begin{equation*}
\cd{
& I \ar[r]^-j \ar[dl]_-j \ar[d]^-j & {\sf B}(B;B) \ar[d]^-{{\sf B}(1;\eta)} \\
{\sf A}(SB;SB) \ar[r]_-T & {\sf B}(TSB;TSB) \ar[r]_-{{\sf B}(\eta;1)} & {\sf B}(B;TSB)
}
\end{equation*}
which commutes by the identity axioms for composition in ${\sf B}$ and since $T$ preserves identities.

Equivariance of $S$ is proved by the commutativity of the following diagram,
\begin{equation*}
\cd[@C=3em]{
{\sf B}((B_i);C) \ar[r]^-{{\sf B}((1);\eta)} \ar[d]_-{\sigma} & {\sf B}((B_i);TSC) \ar[r]^-{N^{-1}} \ar[d]^-{\sigma} & {\sf A}((SB_i);SC) \ar[d]^-{\sigma} \\
{\sf B}((B_{\sigma(i)});C) \ar[r]_-{{\sf B}((1);\eta)} & {\sf B}((B_{\sigma(i)});TSC) \ar[r]_-{N^{-1}} & {\sf A}((SB_{\sigma(i)});SC)
}
\end{equation*}
whose left-hand region commutes by equivariance of composition in ${\sf B}$ and whose right-hand region commutes by the commutativity of the diagram
\begin{equation*}
\cd[@C=3em]{
{\sf B}((B_i);TSC) \ar[d]_-{\sigma} & {\sf B}((TSB_i);TSC) \ar[d]^-{\sigma} \ar[l]_-{{\sf B}((\eta);1)} & {\sf A}((SB_i);SC) \ar[d]^-{\sigma} \ar[l]_-T \\
{\sf B}((B_{\sigma(i)});TSC) & {\sf B}((TSB_{\sigma(i)});TSC) \ar[l]^-{{\sf B}((\eta);1)} & {\sf A}((SB_{\sigma(i)});SC) \ar[l]^-{T}
}
\end{equation*}
which commutes by equivariance of composition in ${\sf B}$ and by equivariance of $T$.

The ${\sf V}$-multinaturality of $\eta \colon 1 \lra TS$ is expressed by the commutativity of the diagram
\begin{equation*}
\cd{
{\sf B}(B_1,\ldots,B_n;C) \ar[r]^-{{\sf B}((1);\eta)} \ar[d]_-S & {\sf B}(B_1,\ldots,B_n;TSC) \\
{\sf A}(SB_1,\ldots,SB_n;SC) \ar[r]_-T & {\sf B}(TSB_1,\ldots,TSB_n;TSC) \ar[u]_-{{\sf B}(\eta,\ldots,\eta;1)}
}
\end{equation*}
which commutes by definition of the ${\sf V}$-multifunctor $S$.

The definition and ${\sf V}$-multinaturality of the counit $\varepsilon$ are given by the Yoneda lemma for enriched symmetric multicategories. Explicitly, define $\varepsilon_A \in V{\sf A}(STA;A)$ to be the unique element corresponding under the bijection $VN \colon V{\sf A}(STA;A) \cong V{\sf B}(TA;TA)$ to the identity $1_{TA}$; that is, $\varepsilon_A \colon STA \lra A$ is the unique morphism of ${\sf A}$ such that $T\varepsilon_A \circ \eta_{TA} = 1_{TA}$.  The  ${\sf V}$-multinaturality of $\varepsilon$ is expressed by the commutativity of the following diagram,
\begin{equation*}
\cd{
{\sf B}(TA_1,\ldots,TA_n;TB) \ar[r]^-S & {\sf A}(STA_1,\ldots,STA_n;STB) \ar[d]^-{{\sf A}((1);\varepsilon)} \\
{\sf A}(A_1,\ldots,A_n;B) \ar[u]^-T \ar[r]_-{{\sf A}((\varepsilon);1)} & {\sf A}(STA_1,\ldots,STA_n;B)
}
\end{equation*}
which commutes by the invertibility of $N$ (\ref{kellyn}) applied to the diagram
\scriptsize
\begin{equation*}
\cd{
{\sf A}((A_i);B) \ar[r]^-T \ar[d]_-{{\sf A}((\varepsilon);1)} & {\sf B}((TA_i);TB) \ar[d]_-{{\sf B}((T\varepsilon);1)} \ar[r]^-{{\sf B}((1);\eta)} \ar[dr]^-1& {\sf B}((TA_i);TSTB) \ar[d]^-{{\sf B}((1);T\varepsilon)} & {\sf B}((TSTA_i);TSTB) \ar[d]^-{{\sf B}((1);T\varepsilon)} \ar[l]_-{{\sf B}((\eta);1)} & {\sf A}((STA_i);STB) \ar[d]^-{{\sf A}((1);\varepsilon)} \ar[l]_-T \\
{\sf A}((STA_i);B) \ar[r]_-T & {\sf B}((TSTA_i);TB) \ar[r]_-{{\sf B}((\eta);1)} & {\sf B}((TA_i);TB) & {\sf B}((TSTA_i);TB) \ar[l]^-{{\sf B}((\eta);1)} & {\sf A}((STA_i);B) \ar[l]^-T
}
\end{equation*}
\normalsize
which commutes by ${\sf V}$-multifunctoriality of $T$, associativity of composition in ${\sf B}$, and the triangle identity defining $\varepsilon$. The other triangle identity follows by the standard argument.
\end{proof} 

We now prove the main theorem of this section, which gives 
a necessary and sufficient condition for the symmetric $\sf{W}$-multifunctor $\widehat{T} \colon T_{\ast}\underline{\sf{V}} \lra \underline{\sf{W}}$ induced by a symmetric multifunctor $T \colon {\sf V} \lra {\sf W}$ between symmetric closed multicategories to be the right adjoint of an adjunction of symmetric $\sf{W}$-multicategories. This theorem generalises the corresponding result for symmetric closed monoidal categories (see \cite[\S 5]{MR0255632} and \cite{MR3455390}).

\begin{theorem} \label{adjfunthm}
Let $T \colon {\sf V}\lra {\sf W}$ be a symmetric multifunctor between symmetric closed multicategories. If for each object $Y \in \sf{W}$ there exists an object $SY\in\sf{V}$ and a morphism $\eta_Y \colon Y \lra TSY$ in $\sf{W}$ such that the composite morphism 
\begin{equation} \label{kellynm}
\cd[@C=3em]{
T[SY,X] \ar[r]^-{\psi} & [TSY,TX] \ar[r]^-{[\eta_Y,1]} & [Y,TX]
}
\end{equation}
is an isomorphism in ${\sf W}$ for each pair of objects $Y \in \sf{W}$ and $X \in \sf{V}$, then  this data extends uniquely to an adjunction of symmetric $\sf{W}$-multicategories $S \dashv \widehat{T}$ whose right adjoint is the symmetric ${\sf W}$-multifunctor $\widehat{T} \colon T_{\ast}\underline{\sf{V}} \lra \underline{\sf{W}}$ induced by $T$ and whose unit is $\eta$.
\end{theorem}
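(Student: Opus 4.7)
The plan is to reduce the theorem to Lemma~\ref{adjfunlemma} applied to the symmetric $\sf{W}$-multifunctor $\widehat{T} \colon T_{\ast}\underline{\sf V} \lra \underline{\sf W}$, taking as universal data the given objects $SY$ (now viewed as objects of $T_{\ast}\underline{\sf V}$) together with the components $\eta_Y \colon Y \lra TSY = \widehat{T}(SY)$. The conclusion and uniqueness clauses of the theorem will then follow directly from those of Lemma~\ref{adjfunlemma}. Thus the whole content of the proof is to verify, for every $n \geq 0$ and every tuple of objects $Y_1,\ldots,Y_n \in \sf{W}$ and $X \in \sf{V}$, that the morphism (\ref{kellyn}) associated to $\widehat{T}$ --- which I will denote $N_n$ --- is an isomorphism in $\sf{W}$.

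I would prove this by induction on $n$. The base case $n=0$ is immediate, since by definition $\underline{\sf V}(\,\,;X) = X$ and $\underline{\sf W}(\,\,;TX) = TX$, and both factors in $N_0$ are identities.  For the inductive step, I would combine the recursive definitions of $\underline{\sf V}$, $\underline{\sf W}$ and $\widehat{T}$ with the recursive behaviour of the reindexing maps in the self-enrichment. On the one hand, by the very definition of $\widehat{T}$ recalled in the excerpt, under the identifications $\underline{\sf V}(SY_1,\ldots,SY_n;X) = \underline{\sf V}(SY_1,\ldots,SY_{n-1};[SY_n,X])$ and similarly on the $\sf{W}$-side, one has $\widehat{T}_n = \underline{\sf W}((1);\psi) \circ \widehat{T}_{n-1}^{[SY_n,X]}$. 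On the other hand, the same recursive identification translates reindexing in the $n$-th input slot by $\eta_{Y_n}$ into postcomposition with $[\eta_{Y_n},1]$ at level $n-1$, so
\[
\underline{\sf W}(\eta_{Y_1},\ldots,\eta_{Y_n};1) \;=\; \underline{\sf W}(\eta_{Y_1},\ldots,\eta_{Y_{n-1}};[\eta_{Y_n},1]).
\]
Combining these two identities and using functoriality of $\underline{\sf W}(-,\ldots,-;-)$ in its separate arguments, I would deduce
\[
N_n \;=\; \underline{\sf W}\bigl(Y_1,\ldots,Y_{n-1};[\eta_{Y_n},1]\circ\psi\bigr) \;\circ\; N_{n-1}^{[SY_n,X]}.
\]

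The first factor is an isomorphism because $[\eta_{Y_n},1] \circ \psi$ is the morphism (\ref{kellynm}), invertible by hypothesis, and $\underline{\sf W}(Y_1,\ldots,Y_{n-1};-)$ preserves isomorphisms; the second factor is invertible by the induction hypothesis applied to the object $[SY_n,X] \in {\sf V}$ and the shorter tuple $(Y_1,\ldots,Y_{n-1})$. Hence $N_n$ is an isomorphism, completing the induction, and Lemma~\ref{adjfunlemma} then yields a unique adjunction $S \dashv \widehat{T}$ of symmetric $\sf{W}$-multicategories with unit $\eta$.

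The main obstacle is the bookkeeping in the decomposition of $N_n$: one must carefully unfold the recursive definitions of both $\widehat{T}$ and of the reindexing maps of the self-enrichment, and verify that the ``last-slot'' reindexing by $\eta_{Y_n}$ really does transpose, under the iterated internal-hom identification, into postcomposition with $[\eta_{Y_n},1]$ that is then composable with $\psi$ to reproduce exactly the map (\ref{kellynm}). Once this identification is made, everything else is formal.
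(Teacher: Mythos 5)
Your proposal is correct and follows essentially the same route as the paper: both reduce to Lemma~\ref{adjfunlemma} for $\widehat{T}$ and prove by induction on $n$ that the morphism (\ref{kellyn}) is invertible, with the inductive step given by exactly the factorisation $N_n = \underline{\sf W}\bigl(Y_1,\ldots,Y_{n-1};[\eta_{Y_n},1]\circ\psi\bigr)\circ N_{n-1}$, which the paper encodes as a commutative square whose commutativity is justified by associativity of composition in $\underline{\sf W}$. The ``bookkeeping'' step you flag (transposing last-slot reindexing by $\eta_{Y_n}$ into postcomposition with $[\eta_{Y_n},1]$) is treated at the same level of detail in the paper, which simply asserts that (\ref{kellyn}) equals the upper boundary of its diagram ``by definition''.
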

\begin{proof}
We prove by induction on $n \geq 0$ that the symmetric ${\sf W}$-multifunctor $\widehat{T} \colon T_{\ast}\underline{\sf{V}} \lra \underline{\sf{W}}$ induced by $T$ satisfies the hypothesis of Lemma \ref{adjfunlemma}. It is immediate for $n = 0$, since in this case the morphism (\ref{kellyn}) is an identity by definition. For $n \geq 1$, the composite (\ref{kellyn}) is equal by definition to the composite of the upper boundary of the following commutative diagram,

\scriptsize
\begin{equation*}
\cd[@C=1.9em]{
T\underline{\sf V}(SY_1,\ldots,SY_{n-1};[SY_n,X]) \ar[r]^-{\widehat{T}} \ar[dr]_-{\cong} & \underline{\sf W}(TSY_1,\ldots,TSY_{n-1};T[SY_n,X]) \ar[rr]^-{\underline{\sf W}((1);\psi)} \ar[d]^-{\underline{\sf W}(\eta,\ldots,\eta;1)} && \underline{\sf W}(TSY_1,\ldots,TSY_{n-1};[TSY_n,TX]) \ar[d]^-{\underline{\sf W}(\eta,\ldots,\eta;[\eta,1])} \\
{} & \underline{\sf W}(Y_1,\ldots,Y_{n-1};T[SY_n,X]) \ar[rr]_-{\underline{\sf W}((1);[\eta,1] \circ \psi)}^-{\cong} && \underline{\sf W}(Y_1,\ldots,Y_{n-1};[Y_n,TX])
}
\end{equation*}
\normalsize
which commutes by the associativity of composition in $\underline{\sf W}$, and
in which the diagonal composite and the bottom morphism are isomorphisms by the induction hypothesis and the assumption (\ref{kellynm}) of the theorem respectively. Hence the morphism (\ref{kellyn}) is an isomorphism, and therefore by induction $\widehat{T}$ satisfies the hypothesis of Lemma \ref{adjfunlemma}.
\end{proof}

\begin{remark}
The condition of Theorem \ref{adjfunthm} is necessary, since if $S \dashv \widehat{T} \colon T_{\ast}\underline{\sf V} \lra \underline{\sf W}$ is an adjunction of ${\sf W}$-enriched multicategories, then it follows from the triangle identities and the ${\sf W}$-naturality of the unit $\eta$ and counit $\varepsilon$ of this adjunction that the composite morphism
\begin{equation*}
\cd[@C=3em]{
[Y,TX] \ar[r]^-S & T[SY,STX] \ar[r]^-{T[1,\varepsilon_X]} & T[SY,X]
}
\end{equation*}
is inverse to the morphism (\ref{kellynm}) in ${\sf W}$.
\end{remark}

Finally, to prove that a  symmetric multifunctor $T$ as in Theorem \ref{adjfunthm} is itself the right adjoint of an adjunction of symmetric multicategories, it is necessary and sufficient to further assume the $n=0$ case of the hypothesis of Lemma \ref{adjfunlemma}. This hypothesis is important in its own right, as it ensures that change of base along $T$ preserves underlying (multi)categories, and so we enshrine it in the following definition.

\begin{definition} \label{pronormaldef}
A symmetric multifunctor $T \colon {\sf V} \lra {\sf W}$ is said to be \emph{pronormal} if the function
\begin{equation} \label{pronormal}
\cd{
VX = {\sf V}(\,\,;X) \ar[r]^-T & {\sf W}(\,\,;TX) = WTX
}
\end{equation}
is a bijection for each object $X \in {\sf V}$.
\end{definition}

\begin{proposition} \label{pronormalprop}
The right adjoint of an adjunction of symmetric multicategories is pronormal.
\end{proposition}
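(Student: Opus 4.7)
The plan is to construct an explicit two-sided inverse to \eqref{pronormal} out of the data of the given adjunction. Writing $S \dashv T \colon {\sf V} \lra {\sf W}$ with unit $\eta \colon 1 \Rightarrow TS$ and counit $\varepsilon \colon ST \Rightarrow 1$ (both of which are, by definition of an adjunction of symmetric multicategories, symmetric multinatural transformations), I would define
$$\Phi \colon {\sf W}(\,\,;TX) \lra {\sf V}(\,\,;X), \qquad \Phi(f) := \varepsilon_X \circ Sf,$$
where $Sf \in {\sf V}(\,\,;STX)$ is the image of the $0$-ary morphism $f \colon \,\, \lra TX$ under the symmetric multifunctor $S$.

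To verify $\Phi \circ T = \mathrm{id}$, I would specialise the multinaturality square of $\varepsilon$ at a $0$-ary $g \colon \,\, \lra X$ in ${\sf V}$; with no input components to worry about, this collapses to the equation $g = \varepsilon_X \circ STg$, which is exactly $\Phi(Tg) = g$. To verify $T \circ \Phi = \mathrm{id}$, I would specialise the multinaturality of $\eta$ at a $0$-ary $f \colon \,\, \lra TX$ in ${\sf W}$; this yields $\eta_{TX} \circ f = TSf$, whence
$$T\Phi(f) \;=\; T\varepsilon_X \circ TSf \;=\; T\varepsilon_X \circ \eta_{TX} \circ f \;=\; 1_{TX} \circ f \;=\; f,$$
using the triangle identity $T\varepsilon \cdot \eta T = 1_T$ read off at $X$.

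There is no substantial obstacle here: the argument is the arity-zero specialisation of (the necessity direction of) the hypothesis of Lemma \ref{adjfunlemma}, now extracted from an honest adjunction of symmetric multicategories rather than assumed; it is the direct multicategorical analogue of the standard categorical fact that the counit and the left adjoint together invert the action of a right adjoint on hom-sets.
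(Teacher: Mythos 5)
Your proposal is correct and is essentially the paper's own proof: the paper defines exactly the same inverse $f \mapsto \varepsilon_X \circ Sf$ (the composite $WTX \xrightarrow{S} VSTX \xrightarrow{V\varepsilon_X} VX$) and invokes the triangle identity $T\varepsilon_X \circ \eta_{TX} = 1_{TX}$ together with the multinaturality of $\eta$ and $\varepsilon$, which you have merely spelled out at arity zero.
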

\begin{proof}
Let $S \dashv T \colon {\sf V} \lra {\sf W}$ be an adjunction of symmetric multicategories. It follows from the triangle identity $T\varepsilon_X \circ \eta_{TX} = 1_{TX}$ and the multinaturality of the unit $\eta$ and counit $\varepsilon$ of this adjunction that the composite function
\begin{equation*}
\cd{
WTX \ar[r]^-S & VSTX \ar[r]^-{V\varepsilon_X} & VX
}
\end{equation*}
is inverse to the function (\ref{pronormal}) for each object $X \in{\sf V}$.
\end{proof}

\begin{remark}
Proposition \ref{pronormalprop} generalises the corresponding result for adjunctions of symmetric monoidal categories \cite[Proposition 2.1]{MR0360749}, wherein such a symmetric monoidal functor  is called ``normal'', however this conflicts with modern usage.
\end{remark}

\begin{theorem} \label{ordadjfunthm}
Let $T \colon {\sf V} \lra {\sf W}$ be a symmetric multifunctor between symmetric closed multicategories satisfying the hypothesis of Theorem {\normalfont\ref{adjfunthm}}. If $T$ is moreover pronormal, then $T$ is the right adjoint of an adjunction of symmetric multicategories.
\end{theorem}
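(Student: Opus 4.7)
The plan is to deduce the theorem from Lemma~\ref{adjfunlemma} specialised to the base ${\sf Set}$, in which case it becomes the criterion that a symmetric multifunctor $T \colon {\sf V} \lra {\sf W}$ admits a left adjoint in symmetric multicategories as soon as, for each $Y \in {\sf W}$, one has data $(SY, \eta_Y \colon Y \lra TSY)$ such that the composite
\[
\cd[@C=2.5em]{
{\sf V}(SY_1,\dots,SY_n;X) \ar[r]^-T & {\sf W}(TSY_1,\ldots,TSY_n;TX) \ar[rr]^-{{\sf W}(\eta_{Y_1},\ldots,\eta_{Y_n};1)} && {\sf W}(Y_1,\ldots,Y_n;TX)
}
\]
is a bijection for each $n \geq 0$ and all $Y_1,\ldots,Y_n \in {\sf W}$, $X \in {\sf V}$. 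The data $(SY,\eta_Y)$ is the same as that supplied by the hypothesis of Theorem~\ref{adjfunthm}, so only the bijectivity assertion needs proof.

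To establish it, I would combine two ingredients. First, by the proof of Theorem~\ref{adjfunthm}, the corresponding composite (\ref{kellyn}) for the ${\sf W}$-multifunctor $\widehat{T} \colon T_\ast\underline{\sf V} \lra \underline{\sf W}$ is an isomorphism in ${\sf W}$; applying the underlying-set functor $W = {\sf W}(\,\,;-)$ yields a bijection $WT\underline{\sf V}(SY_1,\ldots,SY_n;X) \cong W\underline{\sf W}(Y_1,\ldots,Y_n;TX)$, whose codomain is naturally ${\sf W}(Y_1,\ldots,Y_n;TX)$ via the defining closed-structure isomorphism of $\underline{\sf W}$. Second, pronormality of $T$ says exactly that the canonical function $V Z \lra WTZ$ is a bijection for each $Z \in {\sf V}$; applied to $Z = \underline{\sf V}(SY_1,\ldots,SY_n;X)$, and combined with $V\underline{\sf V}(SY_1,\ldots,SY_n;X) = {\sf V}(SY_1,\ldots,SY_n;X)$, it provides a bijection ${\sf V}(SY_1,\ldots,SY_n;X) \cong WT\underline{\sf V}(SY_1,\ldots,SY_n;X)$. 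Composing the two bijections produces a bijection ${\sf V}(SY_1,\ldots,SY_n;X) \cong {\sf W}(Y_1,\ldots,Y_n;TX)$ of the correct type.

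The main obstacle is the bookkeeping step of identifying this composite bijection with the displayed (\ref{kellyn})-composite for $T$ itself. The verification proceeds by induction on $n$: the recursive definition of $\widehat{T}$ through the comparison morphism $\psi \colon T[X,Y] \lra [TX,TY]$, together with the fact that $\psi$ is by construction the transpose of $T(\mathrm{ev})$, ensures that, after applying $W$ and passing through the closed-structure adjunctions, the action of $\widehat{T}$ on hom-objects unwinds to the ordinary action of $T$ on multimorphism sets. The pre-composition by $\eta$'s is preserved on the nose since it is already expressed in terms of composition in ${\sf W}$. Once this identification is made, the hypothesis of Lemma~\ref{adjfunlemma} is verified, and the lemma produces the unique adjunction $S \dashv T$ of symmetric multicategories with unit $\eta$.
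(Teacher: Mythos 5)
Your proposal is correct, but it is organised differently from the paper's proof. The paper transports the \emph{entire} enriched adjunction: since change of base along a symmetric multifunctor is a $2$-functor and hence preserves adjunctions, applying $W_{\ast}$ (change of base along the underlying-set multifunctor $W \colon {\sf W} \lra {\sf Set}$) to the adjunction $S \dashv \widehat{T}$ of Theorem \ref{adjfunthm} immediately yields an adjunction of symmetric multicategories between $W_{\ast}T_{\ast}\underline{\sf V}$ and $W_{\ast}\underline{\sf W}$; pronormality is then used only to identify the right adjoint of this adjunction with $T$ itself (via $W_{\ast}\underline{\sf W} \cong {\sf W}$ and $W_{\ast}T_{\ast}\underline{\sf V} = (WT)_{\ast}\underline{\sf V} \cong V_{\ast}\underline{\sf V} \cong {\sf V}$). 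You instead transport only the isomorphism (\ref{kellyn}) for $\widehat{T}$: apply $W$, rewrite domain and codomain using pronormality and the closed structures of ${\sf V}$ and ${\sf W}$, and then check by induction that the resulting bijection coincides with the composite (\ref{kellyn}) for $T$ over the base ${\sf Set}$, so that Lemma \ref{adjfunlemma} applies directly. Both routes are sound and rest on the same two facts, namely $W_{\ast}\underline{\sf W} \cong {\sf W}$ and the pronormality identification of $WT$ with $V$. What the paper's route buys is that all the unwinding you flag as ``the main obstacle'' is absorbed into the standard facts that $W_{\ast}$ is a $2$-functor and that change of base along a pronormal multifunctor preserves underlying multicategories, so no induction is needed; what your route buys is that Lemma \ref{adjfunlemma} delivers the adjunction together with its unit $\eta$ and the uniqueness of the extension in a single step, at the price of the explicit bookkeeping identifying $W$ of the enriched comparison morphism with the ordinary one. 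That verification, which you correctly isolate and sketch (naturality of the closed-structure bijections, the recursive definition of $\widehat{T}$ through $\psi$ as the transpose of $T(\mathrm{ev})$), is routine, so there is no genuine gap.
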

\begin{proof}
Change of base along the underlying set symmetric multifunctor $W \colon {\sf W} \lra {\sf Set}$ sends the adjunction $S \dashv \widehat{T}$ of Theorem \ref{adjfunthm} to an adjunction of symmetric multicategories between $W_{\ast}T_{\ast}\underline{\sf V}$ and $W_{\ast}\underline{\sf W}$. If $T$ is pronormal, then the right adjoint of this adjunction is isomorphic to  $T \colon {\sf V} \lra {\sf W}$.
\end{proof}

\begin{remark}
The conditions of Theorem \ref{ordadjfunthm} are necessary, since if $S \dashv T \colon {\sf V} \lra {\sf W}$ is an adjunction of symmetric multicategories, then $T$ is pronormal by Proposition \ref{pronormalprop}, and it follows from the triangle identities  and the multinaturality of the unit $\eta$ and counit $\varepsilon$ of this adjunction  that the composite morphism
\begin{equation*}
\cd[@C=3em]{
[Y,TX] \ar[r]^-{\eta} & TS[Y,TX] \ar[r]^-{T\psi} & T[SY,STX] \ar[r]^-{T[1,\varepsilon_X]} & T[SY,X]
}
\end{equation*}
is inverse to the morphism (\ref{kellynm}) in ${\sf W}$.
\end{remark}

\section{The strictification adjunction} \label{stadjsect}
In this section we apply Theorem \ref{adjfunthm} to the inclusion of symmetric multicategories \linebreak ${\sf Gray} \lra {\sf Bicat}$ to prove that the strictification adjunction (\ref{stadj}) underlies an adjunction of ${\sf Bicat}$-enriched categories and, moreover, an adjunction of ${\sf Bicat}$-enriched symmetric multicategories. In fact, it will be useful for future applications (and no more difficult) to prove a stronger result, namely that the strictification adjunction underlies an adjunction of symmetric multicategories enriched over the symmetric multicategory ${\sf PsDbl}$ of \emph{pseudo double categories}, which we prove by applying  Theorem \ref{adjfunthm} to the inclusion ${\sf Dbl} \lra {\sf PsDbl}$ of the symmetric multicategory of double categories into ${\sf PsDbl}$. We introduce these symmetric multicategories of (pseudo) double categories in Appendix \ref{appendix}, and recall their pertinent details below.

  Strictification of bicategories was studied in \cite[\S 4.10]{MR1261589}, \cite[\S 2]{MR3076451}, and \cite{MR3023916}, and was generalised to pseudo double categories in \cite[\S 7]{MR1716779}. To begin, we recall the basic theory of this construction; we refer to the given references for further details. To avoid repetition, we speak in terms of pseudo double categories; the theory of strictification for bicategories is recovered by identifying bicategories with those pseudo double categories whose underlying categories of objects and vertical morphisms are discrete. (Note that this defines a fully faithful functor from the category of bicategories and pseudofunctors to the category of pseudo double categories and pseudo double functors, which has a right adjoint that sends a pseudo double category $A$ to its underlying bicategory $\mathbf{H}A$ of objects, horizontal morphisms, and globular cells.)
  
  Given a path of horizontal morphisms $f_1 \colon a_0 \lra a_1, \ldots, f_n\colon a_{n-1} \lra a_n$ in a pseudo double category $A$ (for $n\geq 0$),  we define a horizontal morphism $\varepsilon(f_1,\ldots,f_n) \colon a_0 \lra a_n$ in $A$ recursively as follows: for $n=0$, define $\varepsilon(\,)$ to be the horizontal identity morphism, for $n= 1$, define $\varepsilon(f) = f$, and for $n \geq 2$, define $\varepsilon(f_1,\ldots,f_n) = f_n \cdot \varepsilon(f_1,\ldots,f_{n-1})$.
  
  The \emph{strictification} of a pseudo double category $A$ is the (strict) double category $\st A$ with the same underlying category as $A$, whose horizontal morphisms $(f_1,\ldots,f_n) \colon a \lra b$ are paths of horizontal morphisms in $A$, with horizontal composition given by concatenation of paths, and whose cells $\alpha \colon (f_1,\ldots,f_n) \lra (g_1,\ldots,g_m)$ are given by cells $\alpha \colon \varepsilon(f_1,\ldots,f_n) \lra \varepsilon(g_1,\ldots,g_m)$ in $A$. Vertical composition of cells is as in $A$, and horizontal composition of  cells is defined using the horizontal composition and coherence isomorphisms of $A$. 
  
  For each horizontal morphism $(f_1,\ldots,f_n)$ in $\st A$, let $\kappa \colon (f_1,\ldots,f_n) \lra (\varepsilon(f_1,\ldots,f_n))$ denote the invertible globular cell in $\st A$ given by the identity cell on the horizontal morphism $\varepsilon(f_1,\ldots,f_n)$ in $A$. Note that every cell $(f_1,\ldots,f_n) \lra (g_1,\ldots,g_m)$ of $\st A$ is equal to the composite
 \begin{equation} \label{everycell}
 \cd{
 (f_1,\ldots,f_n) \ar[r]^-{\kappa} & (\varepsilon(f_1,\ldots,f_n)) \ar[r]^-{\alpha} & (\varepsilon(g_1,\ldots,g_m)) \ar[r]^-{\kappa^{-1}} & (g_1,\ldots,g_m)
 }
 \end{equation}
 for a unique cell $\alpha \colon \varepsilon(f_1,\ldots,f_n) \lra \varepsilon(g_1,\ldots,g_m)$ in $A$.
  
Let $\eta_A \colon A \lra \st A$ denote the pseudo double functor that is the identity on underlying categories and that sends a horizontal morphism $f$ to the unary path $(f)$ and a cell $\alpha \colon f \lra g$ to the cell $\alpha \colon (f) \lra (g)$ given by $\alpha$. The unit and composition constraints of the pseudo double functor $\eta_A$ are instances of the invertible globular cells $\kappa$.  
    
Let $\mathbf{Dbl}$ and $\mathbf{PsDbl}$ denote the category of double categories and double functors and the category of pseudo double categories and pseudo double functors respectively. The ``one-dimensional'' universal property of the strictification $\st A$ of a pseudo double category $A$ (the proof of which we recall as part of the proof of Theorem \ref{3duniprop} below) states that the composite function
\begin{equation*}
\cd{
\mathbf{Dbl}(\st A,B) \ar[r] & \mathbf{PsDbl}(\st A,B) \ar[rr]^-{\mathbf{PsDbl}(\eta_A,1)} && \mathbf{PsDbl}(A,B)
}
\end{equation*}
is a bijection for each double category $B$. It then follows by a standard categorical argument (which we generalised to enriched symmetric multicategories in Lemma \ref{adjfunlemma}) that there exists a unique extension of the above data to an adjunction
\begin{equation} \label{stadjdbl}
\cd{
\mathbf{Dbl} \ar@<-1.5ex>[rr]^-{\hdash}_-{} && \ar@<-1.5ex>[ll]_-{\st} \mathbf{PsDbl}
}
\end{equation}
whose left adjoint sends a pseudo double category to its strictification and whose unit has components given by the pseudo double functors $\eta_A \colon A \lra \st A$.

In Appendix \ref{appendix}, we introduce the symmetric multicategory ${\sf PsDbl}$ of pseudo double categories and its sub symmetric multicategory ${\sf Dbl}$ of double categories. Both are symmetric closed multicategories: for pseudo double categories $A$ and $B$, their internal hom in ${\sf PsDbl}$ is the pseudo double category $\underline{\mathbf{Hom}}(A,B)$ whose objects are pseudo double functors from $A$ to $B$, whose vertical morphisms are vertical transformations, whose horizontal morphisms are horizontal pseudo transformations, and whose cells are modifications; if $A$ and $B$ are double categories, their internal hom in ${\sf Dbl}$ is the full sub double category $\underline{\mathbf{Ps}}(A,B)$ of $\underline{\mathbf{Hom}}(A,B)$ on the (strict) double functors; note that if $B$ is a double category, then so is $\underline{\mathbf{Hom}}(A,B)$. In particular, if $A$ and $B$ are bicategories (seen as pseudo double categories with discrete underlying categories), then the vertical morphisms of $\underline{\mathbf{Hom}}(A,B)$ are icons, and its underlying bicategory is the usual hom bicategory $\mathbf{Hom}(A,B)$. 

We now prove the higher ``three-dimensional'' universal property of strictification, which shows that the inclusion of symmetric multicategories ${\sf Dbl} \lra {\sf PsDbl}$ satisfies the hypothesis of Theorem \ref{adjfunthm}.

\begin{theorem} \label{3duniprop}
For every pseudo double category $A$ and double category $B$, the composite double functor
\begin{equation} \label{dblfun}
\cd{
\underline{\Ps}(\st A,B) \ar[r] & \underline{\Hom}(\st A,B) \ar[rr]^-{\underline{\Hom}(\eta_A,1)} && \underline{\Hom}(A,B)
}
\end{equation}
is an isomorphism of double categories.
\end{theorem}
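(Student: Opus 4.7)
The plan is to construct a strict inverse to (\ref{dblfun}) level by level, using the combinatorial description of $\st A$ (horizontal morphisms as paths, cells as cells of $A$ between $\varepsilon$-composites) together with the strictness of $B$. Since (\ref{dblfun}) is itself precomposition with the pseudo double functor $\eta_A$ followed by the forgetting of the pseudo-versus-strict distinction, the structural compatibilities of a double functor are automatic; only the bijections at each of the four levels need to be established.

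First I would recover the bijection on objects, which is the one-dimensional universal property of strictification already recalled in the text: a pseudo double functor $\bar F \colon A \lra B$ extends uniquely to a strict double functor $\tilde F \colon \st A \lra B$ with $\tilde F \eta_A = \bar F$, via the formula $\tilde F(f_1,\ldots,f_n) = \bar F f_n \cdots \bar F f_1$ on paths and $\tilde F(\tau) = \phi_g^{-1} \cdot \bar F \tau \cdot \phi_f$ on cells, where $\phi_f$ denotes the iterate of the composition constraint of $\bar F$ at $\varepsilon(f_1,\ldots,f_n)$. For the three higher levels --- vertical transformations, horizontal pseudo transformations, and modifications --- I would construct the extension along the same pattern: the component at a path $(f_1,\ldots,f_n)$ is the horizontal composite in $B$ of the components at the individual $f_i$, with the mediating coherence data assembled from that of the underlying pseudo double functors; the value on a general cell of $\st A$ is then forced by the factorisation (\ref{everycell}). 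Uniqueness of the extension in each case is immediate, since any transformation or modification out of $\st A$ into a strict setting is determined by its restriction along $\eta_A$: values on longer paths are fixed by horizontal functoriality (which is strict for strict double functors), and values on general cells are fixed by (\ref{everycell}).

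The remaining work is to verify that the levelwise assignments respect vertical composition, horizontal composition, identities, and the interchange law at each level, so as to define a genuine double functor inverse to (\ref{dblfun}), and to check that the two composites are both identities. I expect the main obstacle to lie in the pseudonaturality axioms for the extended horizontal pseudo transformations: their verification requires a pasting argument combining the iterated composition constraints $\phi$ of the underlying pseudo double functors with the canonical isomorphisms $\kappa$, and a careful disentangling of how the cells (\ref{everycell}) interact with horizontal composition. The analogous checks for vertical transformations, for modifications, and for the compatibility of the extension with vertical and horizontal composition all reduce, by lower-dimensional versions of the same argument, to the corresponding axioms for the underlying $A$-level data.
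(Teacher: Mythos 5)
Your plan follows the paper's own proof essentially step for step: the paper likewise establishes the bijection at each of the four levels by extending along $\eta_A$ via the same recursive pasting formulas on paths, deduces uniqueness from strict horizontal functoriality together with the factorisation (\ref{everycell}), and handles naturality at general cells by precisely the inductive $\kappa$-pasting argument you flag as the main obstacle. Note only that your closing worry about re-verifying double-functoriality of the inverse is superfluous, as you yourself observe at the outset: since (\ref{dblfun}) is already a double functor, levelwise bijectivity suffices for it to be an isomorphism.
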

\begin{proof}
The ``one-dimensional'' universal property of strictification states that the double functor (\ref{dblfun}) is bijective on objects. To prove this, let $F \colon A \lra B$ be a pseudo double functor. Define $\overline{F} \colon \st A \lra B$ to be the (strict) double functor that agrees with $F$ on underlying categories, that sends a horizontal morphism $(f_1,\ldots,f_n)$ in $\st A$ to the horizontal morphism $\varepsilon(Ff_1,\ldots,Ff_n)$ in $B$, and that sends a cell in $\st A$ of the form (\ref{everycell}) to the following vertical composite cell in $B$,
\begin{equation*}
\cd{
\varepsilon(Ff_1,\ldots,Ff_n) \ar[r]^-{\varphi} & F\varepsilon(f_1,\ldots,f_n) \ar[r]^-{F\alpha} & F\varepsilon(g_1,\ldots,g_m) \ar[r]^-{\varphi^{-1}} & \varepsilon(Fg_1,\ldots,Fg_m)
}
\end{equation*}
where $\varphi \colon \varepsilon(Ff_1,\ldots,Ff_n) \lra F\varepsilon(f_1,\ldots,f_n)$ is the canonical invertible globular cell in $B$ defined recursively as follows: for $n=0$, define $\varphi$ to be the unit constraint $\varphi_0 \colon 1_{Fa} \lra F1_a$ of the pseudo double functor $F$, for $n=1$, define $\varphi$ to be the identity $1_{Ff} \colon Ff \lra Ff$, and for $n \geq 2$, define $\varphi$ to be the following vertical composite cell in $B$,
\begin{equation*}
\cd{
Ff_n \cdot \varepsilon(Ff_1,\ldots,Ff_{n-1}) \ar[r]^-{1 \cdot \varphi} & Ff_n \cdot F\varepsilon(f_1,\ldots,f_{n-1}) \ar[r]^-{\varphi_2} & F\varepsilon(f_1,\ldots,f_n)
}
\end{equation*}
where $\varphi_2$ denotes the composition constraint of the pseudo double functor $F$. Vertical functoriality of $\overline{F}$ follows from that of $F$, and horizontal functoriality of $\overline{F}$ is immediate for morphisms and is proved for cells by the coherence theorem for pseudofunctors applied to the underlying pseudofunctor $\mathbf{H}F \colon \mathbf{H}A \lra \mathbf{H}B$ of $F$ (see \cite[\S 2.3]{MR3076451}).

By the above definition, we see that $\overline{F}$ is the unique double functor satisfying the equation $\overline{F} \circ \eta_A = F$ of pseudo double functors. This is immediate on underlying categories, and holds for horizontal morphisms by the horizontal functoriality axioms of a double functor. To see that $\overline{F}$ is uniquely determined on cells by this equation and by the vertical functoriality axioms of a double functor, observe that the equation asserts in particular that $\overline{F}$ sends the pseudo double functor constraints of $\eta$ to the corresponding constraints of $F$, which are instances of the canonical invertible globular cells $\kappa$ and $\varphi$ respectively, and hence moreover implies that for each horizontal morphism $(f_1,\ldots,f_n)$ in $\st A$, the double functor $\overline{F}$ sends $\kappa \colon (f_1,\ldots,f_n) \lra (\varepsilon(f_1,\ldots,f_n))$ to $\varphi \colon \varepsilon(Ff_1,\ldots,Ff_n) \lra F\varepsilon(f_1,\ldots,f_n)$, as can  be proved by induction using the fact that for $n \geq 2$, $\kappa$ is equal to the following composite.
\begin{equation} \label{kappacomp}
\cd{
(f_{n}) \cdot (f_1,\ldots,f_{n-1}) \ar[r]^-{1 \cdot \kappa} & (f_{n}) \cdot \eta\varepsilon(f_1,\ldots,f_{n-1}) \ar[r]^-{\kappa} & \eta\varepsilon(f_1,\ldots,f_{n})
}
\end{equation}

It remains to prove that the double functor (\ref{dblfun}) is fully faithful on vertical morphisms, horizontal morphisms, and cells.
Let $F,G \colon \st A \lra B$ be double functors and let $\sigma \colon F\circ\eta_A \lra G\circ\eta_A$ be a vertical transformation. Define $\overline{\sigma} \colon F \lra G$ to be the vertical transformation whose component at an object $a \in A$ is the vertical morphism $\overline{\sigma}_a = \sigma_a \colon Fa \lra Ga$, and whose component at a horizontal morphism $(f_1,\ldots,f_n)$ in $\st A$ is the cell $\overline{\sigma}_{(f_1,\ldots,f_n)}$  defined recursively  as follows: for $n=0$, define $\overline{\sigma}_{() \colon a \to a}$ to be the identity cell on $\sigma_a$, and for $n \geq 1$, define $\overline{\sigma}_{(f_1,\ldots,f_{n})}$ to be the following composite in $B$.
\begin{equation*}
\cd{
Fa \ar[rr]^-{F(f_1,\ldots,f_{n-1})} \ar[d]_-{\sigma_a} \dtwocell{dr}{\overline{\sigma}_{(f_1,\ldots,f_{n-1})}} &{} & Fb \ar[rr]^-{F(f_{n})} \ar[d]^-{\sigma_b} \dtwocell{drr}{\sigma_{f_{n}}} && Fc \ar[d]^-{\sigma_c} \\
Ga \ar[rr]_-{G(f_1,\ldots,f_{n-1})} &{} & Gb \ar[rr]_-{G(f_{n})} && Gc
}
\end{equation*}
The horizontal functoriality of $\overline{\sigma}$ follows by a standard induction from the definition of $\overline{\sigma}$ and the functoriality of $\sigma$. The naturality of $\overline{\sigma}$ with respect to vertical morphisms is immediate from the corresponding property for $\sigma$. To prove the naturality of $\overline{\sigma}$ with respect to cells, we use the following result. 

We prove by induction on $n \geq 0$ that for each horizontal morphism $(f_1,\ldots,f_n)$ in $\st A$, the following equation holds in $B$.
\begin{equation*}
\cd[@C=2.5em]{
Fa \ar[rr]^-{F(f_1,\ldots,f_n)} \ar@{=}[d] \dtwocell{drr}{F\kappa} & {} & Fb \ar@{=}[d] \\
Fa \ar[rr]|-{F\eta\varepsilon(f_1,\ldots,f_n)} \ar[d]_-{\sigma_a} \dtwocell{dr}{\sigma_{\varepsilon(f_1,\ldots,f_n)}} & {} & Fb \ar[d]^-{\sigma_b} \\
Ga \ar[rr]_-{G\eta\varepsilon(f_1,\ldots,f_n)} & {} & Gb
}
\quad
=
\quad
\cd[@C=2.5em]{
Fa \ar[rr]^-{F(f_1,\ldots,f_n)} \ar[d]_-{\sigma_a} \dtwocell{dr}{\overline{\sigma}_{(f_1,\ldots,f_n)}} & {} & Fb \ar[d]^-{\sigma_b} \\
Ga \ar[rr]|-{G(f_1,\ldots,f_n)} \ar@{=}[d] \dtwocell{drr}{G\kappa} & {} & Gb \ar@{=}[d] \\
Ga \ar[rr]_-{G\eta\varepsilon(f_1,\ldots,f_n)} & {} & Gb
}
\end{equation*}
For $n=0$, this equation is the horizontal unit axiom for the vertical transformation $\sigma$, since $\kappa$ in this case is the unit constraint for the pseudo double functor $\eta_A \colon A \lra \st A$. For $n = 1$, the equation is immediate. For $n \geq 2$, the equation follows from the following commutative diagram in the category $B_1$ (i.e.\ the category of horizontal morphisms and cells in $B$),
\begin{equation*}
\cd[@C=2.5em]{
F(f_{n}) \cdot F(f_1,\ldots,f_{n-1}) \ar[r]^-{1\cdot F\kappa} \ar[d]_-{\sigma_{f_{n}}\cdot 1} & F(f_{n}) \cdot F\eta\varepsilon(f_1,\ldots,f_{n-1}) \ar[r]^-{F\kappa} \ar[d]^-{\sigma_{f_{n}} \cdot 1} & F\eta\varepsilon(f_1,\ldots,f_{n}) \ar[dd]^-{\sigma_{\varepsilon(f_1,\ldots,f_{n})}}\\
G(f_{n}) \cdot F(f_1,\ldots,f_{n-1}) \ar[r]^-{1\cdot F\kappa} \ar[d]_-{1 \cdot \overline{\sigma}_{(f_1,\ldots,f_{n-1})}} & G(f_{n}) \cdot F\eta\varepsilon(f_1,\ldots,f_{n-1}) \ar[d]^-{1 \cdot \sigma_{\varepsilon(f_1,\ldots,f_{n-1})}}\\
G(f_{n}) \cdot G(f_1,\ldots,f_{n-1}) \ar[r]_-{1 \cdot G\kappa} & G(f_{n}) \cdot G\eta \varepsilon (f_1,\ldots,f_{n-1}) \ar[r]_-{G\kappa} & G\eta \varepsilon (f_1,\ldots,f_{n})
}
\end{equation*}
whose right-hand region commutes by the horizontal functoriality of the vertical transformation $\sigma$, and whose lower left-hand region commutes  by the induction hypothesis. Here we have used that $\kappa$ for $n=2$ is the composition constraint for the pseudo double functor $\eta_A$, and that for $n \geq 2$ the  isomorphism $\kappa_{(f_1,\ldots,f_{n})}$ is equal to the composite (\ref{kappacomp}).

Since any cell $(f_1,\ldots,f_n) \lra (g_1,\ldots,g_m)$ in $\st A$ is equal to one of the form (\ref{everycell}) 
for some cell $\alpha \colon \varepsilon(f_1,\ldots,f_n) \lra \varepsilon(g_1,\ldots,g_m)$ in $A$, naturality of $\overline{\sigma}$ with respect to cells follows from the following commutative diagram,
\begin{equation*}
\cd{
F(f_1,\ldots,f_n) \ar[r]^-{F\kappa} \ar[d]_-{\overline{\sigma}_{(f_1,\ldots,f_n)}} & F\eta\varepsilon(f_1,\ldots,f_n) \ar[r]^-{F\eta\alpha} \ar[d]_-{\sigma_{\varepsilon(f_1,\ldots,f_n)}} & F\eta\varepsilon(g_1,\ldots,g_m) \ar[r]^-{F\kappa^{-1}} \ar[d]^-{\sigma_{\varepsilon(g_1,\ldots,g_m)}}& F(g_1,\ldots,g_m) \ar[d]^-{\overline{\sigma}_{(g_1,\ldots,g_m)}} \\
G(f_1,\ldots,f_n) \ar[r]_-{G\kappa} & G\eta\varepsilon(f_1,\ldots,f_n) \ar[r]_-{G\eta\alpha} & G\eta\varepsilon(g_1,\ldots,g_m) \ar[r]_-{G\kappa^{-1}}& G(g_1,\ldots,g_m) 
}
\end{equation*}
whose centre region commutes by cell naturality of $\sigma$, and whose outer regions commute by the result of the previous paragraph. 

By the above definition and by the horizontal functoriality axioms for a vertical transformation, we see that  $\overline{\sigma}$ is the unique vertical transformation satisfying the equation $\overline{\sigma} \circ \eta_A = \sigma$. Hence the double functor (\ref{dblfun}) is fully faithful on vertical morphisms.

The proof that the double functor (\ref{dblfun}) is fully faithful on horizontal morphisms essentially follows the same argument as for vertical morphisms. Let $F,G \colon \st A \lra B$ be double functors, and let $\theta \colon F\circ\eta_A \lra G\circ\eta_A$ be a pseudo horizontal transformation. Define $\overline{\theta} \colon F \lra G$ to be the pseudo horizontal transformation whose component at an object $a \in A$ is the horizontal morphism $\overline{\theta}_a = \theta_a$, whose component at a  vertical morphism $u \colon a \lra b$ of $A$ is the cell $\overline{\theta}_u = \theta_u$, and whose component at a horizontal morphism $(f_1,\ldots,f_n)$ in $\st A$ is the invertible globular cell $\overline{\theta}_{(f_1,\ldots,f_n)}$ defined recursively as follows: for $n=0$, define $\overline{\theta}_{() \colon a \to a}$ to be the identity cell on $\theta_a$, and for $n \geq 1$, define $\overline{\theta}_{(f_1,\ldots,f_{n})}$ to be the following pasting composite of globular cells in $B$.
\begin{equation*}
\cd{
Fa \ar[rr]^-{F(f_1,\ldots,f_{n-1})} \ar[d]_-{\theta_a} \dtwocell{dr}{\overline{\theta}_{(f_1,\ldots,f_{n-1})}} &{} & Fb \ar[rr]^-{F(f_{n})} \ar[d]^-{\theta_b} \dtwocell{drr}{\theta_{f_{n}}} && Fc \ar[d]^-{\theta_c} \\
Ga \ar[rr]_-{G(f_1,\ldots,f_{n-1})} &{} & Gb \ar[rr]_-{G(f_{n})} && Gc
}
\end{equation*}
The horizontal functoriality of $\overline{\theta}$ follows by a standard induction from the definition of $\overline{\theta}$ and the functoriality of $\theta$. The vertical functoriality of $\overline{\theta}$ is immediate from the corresponding property for $\theta$. To prove the naturality of $\overline{\sigma}$ with respect to cells, we use the following result. 

We prove by induction on $n \geq 0$ that for each horizontal morphism $(f_1,\ldots,f_n)$ in $\st A$, the following square (whose arrows are invertible globular cells in $B$) commutes.
\begin{equation*}
\cd{
\theta_b \cdot F(f_1,\ldots,f_n) \ar[r]^-{1 \cdot F\kappa} \ar[d]_-{\overline{\theta}_{(f_1,\ldots,f_n)}} & \theta_b \cdot F\eta\varepsilon(f_1,\ldots,f_n) \ar[d]^-{\theta_{\varepsilon(f_1,\ldots,f_n)}} \\
G(f_1,\ldots,f_n) \cdot \theta_a \ar[r]_-{G\kappa \cdot 1} & G\eta\varepsilon(f_1,\ldots,f_n) \cdot \theta_a
}
\end{equation*}
For $n=0$, this equation is the horizontal unit axiom 
for the pseudo horizontal transformation $\theta \colon F\circ\eta_A \lra G\circ\eta_A$. For $n = 1$, the equation is immediate. For $n \geq 2$, the equation follows from the commutative diagram
\begin{equation*}
\cd[@C=2.5em]{
\theta_c \cdot F(f_{n}) \cdot F(f_1,\ldots,f_{n-1}) \ar[r]^-{1\cdot1\cdot F\kappa} \ar[d]_-{\theta_{f_{n}}\cdot 1} & \theta_c \cdot F(f_{n}) \cdot F\eta\varepsilon(f_1,\ldots,f_{n-1}) \ar[r]^-{1 \cdot F\kappa} \ar[d]^-{\theta_{f_{n}} \cdot 1} & \theta_c \cdot F\eta\varepsilon(f_1,\ldots,f_{n}) \ar[dd]^-{\theta_{\varepsilon(f_1,\ldots,f_{n})}}\\
G(f_{n}) \cdot \theta_b \cdot F(f_1,\ldots,f_{n-1}) \ar[r]^-{1 \cdot 1\cdot F\kappa} \ar[d]_-{1 \cdot \overline{\theta}_{(f_1,\ldots,f_{n-1})}} & G(f_{n}) \cdot \theta_b \cdot F\eta\varepsilon(f_1,\ldots,f_{n-1}) \ar[d]^-{1 \cdot \theta_{\varepsilon(f_1,\ldots,f_{n-1})}}\\
G(f_{n}) \cdot G(f_1,\ldots,f_{n-1}) \cdot \theta_a \ar[r]_-{1 \cdot G\kappa \cdot 1} & G(f_{n}) \cdot G\eta \varepsilon (f_1,\ldots,f_{n-1}) \cdot \theta_a \ar[r]_-{G\kappa \cdot 1} & G\eta \varepsilon (f_1,\ldots,f_{n}) \cdot \theta_a
}
\end{equation*}
whose right-hand region commutes by the horizontal functoriality of the pseudo horizontal transformation $\theta$, and whose lower left-hand region commutes by the induction hypothesis. 

Since any cell $(f_1,\ldots,f_n) \lra (g_1,\ldots,g_m)$ in $\st A$ is equal to one of the form (\ref{everycell}) for some cell $\alpha \colon \varepsilon(f_1,\ldots,f_n) \lra \varepsilon(g_1,\ldots,g_m)$ in $A$ (with source and target vertical morphisms $u$ and $v$, say), naturality of $\overline{\theta}$ with respect to cells follows from the following commutative diagram in the category $B_1$,
\begin{equation*}
\cd[@C=2.43em]{
\theta_b \cdot F(f_1,\ldots,f_n) \ar[r]^-{1 \cdot F\kappa} \ar[d]_-{\overline{\theta}_{(f_1,\ldots,f_n)}} & \theta_b \cdot F\eta\varepsilon(f_1,\ldots,f_n) \ar[r]^-{\theta_v \cdot F\eta\alpha} \ar[d]_-{\theta_{\varepsilon(f_1,\ldots,f_n)}} & \theta_d \cdot F\eta\varepsilon(g_1,\ldots,g_m) \ar[r]^-{1 \cdot F\kappa^{-1}} \ar[d]^-{\theta_{\varepsilon(g_1,\ldots,g_m)}}& \theta_d \cdot F(g_1,\ldots,g_m) \ar[d]^-{\overline{\theta}_{(g_1,\ldots,g_m)}} \\
G(f_1,\ldots,f_n) \cdot \theta_a \ar[r]_-{G\kappa \cdot 1} & G\eta\varepsilon(f_1,\ldots,f_n) \cdot \theta_a \ar[r]_-{G\eta\alpha \cdot \theta_u} & G\eta\varepsilon(g_1,\ldots,g_m)\cdot \theta_a \ar[r]_-{G\kappa^{-1} \cdot 1}& G(g_1,\ldots,g_m) \cdot \theta_a
}
\end{equation*}
whose centre region commutes by cell naturality of $\theta$, and whose outer regions commute by the result of the previous paragraph. 

By the above definition and by the horizontal functoriality axioms for a pseudo horizontal transformation, we see that  $\overline{\theta}$ is the unique pseudo horizontal transformation satisfying the equation $\overline{\theta} \circ \eta_A = \theta$. Hence the double functor (\ref{dblfun}) is fully faithful on horizontal morphisms.

It remains to show that the double functor (\ref{dblfun}) is fully faithful on cells. Let $\theta \colon F \lra G$ and $\varphi \colon H \lra K$ be pseudo horizontal transformations and let $\sigma \colon F \lra H$ and $\tau \colon G \lra K$ be vertical transformations between double functors $\st A\lra B$. Let $m$ be a modification as on the left below.
\begin{equation*}
\cd{
F\circ\eta_A \ar[r]^-{\theta\circ\eta_A} \ar[d]_-{\sigma\circ\eta_A} \dtwocell{dr}{m} & G\circ\eta_A \ar[d]^-{\tau\circ\eta_A} \\
H\circ\eta_A \ar[r]_-{\varphi\circ\eta_A} & K\circ \eta_A
}
\qquad
\qquad
\cd{
F \ar[r]^-{\theta} \ar[d]_-{\sigma} \dtwocell{dr}{\overline{m}} & G\ar[d]^-{\tau} \\
H \ar[r]_-{\varphi} & K 
}
\end{equation*}
Define $\overline{m}$ to be the modification as on the right above whose component at an object $a \in A$ is the cell $\overline{m}_a = m_a$. Vertical naturality of $\overline{m}$ follows immediately from that of $m$, and horizontal naturality is proved by a standard induction. Moreover, $\overline{m}$ is evidently the unique modification satisfying the equation $\overline{m} \circ \eta_A = m$. Hence the double functor (\ref{dblfun}) is fully faithful on cells, and is therefore an isomorphism of double categories.
\end{proof}

We can deduce from this theorem that the  inclusion of symmetric multicategories \linebreak ${\sf Gray} \lra {\sf Bicat}$ also satisfies the hypothesis of Theorem \ref{adjfunthm}. Recall that for bicategories $A$ and $B$, their internal hom  in ${\sf Bicat}$ is the bicategory $\mathbf{Hom}(A,B)$ of pseudofunctors, pseudonatural transformations, and modifications, which is a $2$-category if $B$ is strict, and that if $A$ and $B$ are $2$-categories, their internal hom in ${\sf Gray}$ is the full sub-$2$-category $\mathbf{Ps}(A,B)$ of $\mathbf{Hom}(A,B)$ on the $2$-functors.

\begin{corollary} \label{3duniprop2}
For every bicategory $A$ and $2$-category $B$, the composite $2$-functor
\begin{equation*}
\cd{
\Ps(\st A,B) \ar[r] & \Hom(\st A,B) \ar[rr]^-{\Hom(\eta_A,1)} && \Hom(A,B)
}
\end{equation*}
is an isomorphism of $2$-categories.
\end{corollary}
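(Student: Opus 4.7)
The plan is to deduce this corollary directly from Theorem~\ref{3duniprop} by viewing bicategories as pseudo double categories with discrete underlying categories and then passing to underlying horizontal bicategories.

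First I would view the bicategory $A$ as a pseudo double category with discrete underlying category. Since the strictification construction described before Theorem~\ref{3duniprop} leaves the underlying category of objects and vertical morphisms unchanged, $\st A$ is then a strict double category whose underlying category is still discrete, i.e.\ a $2$-category; under the identification of bicategories with such pseudo double categories this is precisely the usual strictification of the bicategory $A$, and $\eta_A\colon A\lra\st A$ is its unit pseudofunctor. Likewise, the $2$-category $B$ is regarded as a strict double category with discrete underlying category.

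Next I would apply Theorem~\ref{3duniprop} to these data, obtaining an isomorphism of double categories
\begin{equation*}
\cd{
\underline{\Ps}(\st A,B) \ar[r] & \underline{\Hom}(\st A,B) \ar[rr]^-{\underline{\Hom}(\eta_A,1)} && \underline{\Hom}(A,B).
}
\end{equation*}
Applying the underlying horizontal bicategory functor $\mathbf{H}$ (the right adjoint to the inclusion of bicategories into pseudo double categories, recalled at the start of \S\ref{stadjsect}) turns this into an isomorphism of bicategories. By the description recalled just before the corollary, for $A$ a bicategory and $B$ a $2$-category one has $\mathbf{H}\,\underline{\Hom}(A,B)=\Hom(A,B)$ as a $2$-category, and similarly $\mathbf{H}\,\underline{\Ps}(\st A,B)=\Ps(\st A,B)$. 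Thus $\mathbf{H}$ converts the displayed isomorphism of double categories into an isomorphism of $2$-categories of the stated form.

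The only thing to check is that $\mathbf{H}$ sends the particular composite in Theorem~\ref{3duniprop} to the particular composite named in the corollary. This is routine: the inclusion $\underline{\Ps}(\st A,B)\hookrightarrow\underline{\Hom}(\st A,B)$ restricts on horizontal morphisms and globular cells to the inclusion $\Ps(\st A,B)\hookrightarrow\Hom(\st A,B)$, and $\mathbf{H}\,\underline{\Hom}(\eta_A,1)=\Hom(\eta_A,1)$ by naturality of $\mathbf{H}$ applied to the pseudofunctor $\eta_A$ (which, as noted above, coincides with the unit of the bicategorical strictification under our identification). No substantive obstacle is expected, since the corollary is exactly the specialization of Theorem~\ref{3duniprop} to pseudo double categories with discrete underlying categories.
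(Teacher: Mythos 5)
Your proposal is correct and matches the paper's own argument: the paper likewise identifies $A$ and $B$ with (pseudo) double categories having discrete underlying categories and obtains the corollary by applying the underlying bicategory functor $\mathbf{H} \colon \mathbf{PsDbl} \lra \Bicat$ to the composite double functor of Theorem \ref{3duniprop}, using that $\mathbf{H}\,\underline{\Hom}(A,B) = \Hom(A,B)$ and $\mathbf{H}\,\underline{\Ps}(\st A,B) = \Ps(\st A,B)$. Your extra checks (that $\st A$ is a $2$-category and that $\mathbf{H}$ carries the specific composite to the stated one) are just a more explicit spelling-out of the same step.
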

\begin{proof}
Applying the underlying bicategory functor $\mathbf{H} \colon \mathbf{PsDbl} \lra \Bicat$ to the composite double functor of Theorem \ref{3duniprop} yields the stated result.
\end{proof}

Therefore the inclusions of symmetric multicategories ${\sf Dbl} \lra {\sf PsDbl}$ and ${\sf Gray} \lra {\sf Bicat}$ both satisfy the hypothesis of Theorem \ref{adjfunthm},  and we can deduce the following results.

\begin{theorem}
The strictification adjunction for pseudo double categories {\normalfont (\ref{stadjdbl})} underlies:
\begin{enumerate}[font=\normalfont, label=(\roman*)]
\item an adjunction of symmetric ${\sf PsDbl}$-multicategories,
\item an adjunction of ${\sf PsDbl}$-categories, and
\item an adjunction of symmetric multicategories.
\end{enumerate}
\end{theorem}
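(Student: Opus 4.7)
The plan is to reduce all three statements to applications of Theorems~\ref{adjfunthm} and~\ref{ordadjfunthm} to the inclusion $T \colon {\sf Dbl} \hookrightarrow {\sf PsDbl}$ of symmetric closed multicategories, with $SY = \st Y$ and $\eta_Y \colon Y \lra \st Y$ the unit of the strictification adjunction~(\ref{stadjdbl}) of pseudo double categories. The first step is to verify the hypothesis of Theorem~\ref{adjfunthm}: for each pseudo double category $Y$ and double category $X$, the composite morphism~(\ref{kellynm}) must be an isomorphism in ${\sf PsDbl}$. Unpacking the internal homs, $[\st Y, X]$ in ${\sf Dbl}$ is $\underline{\Ps}(\st Y, X)$ and $[Y, TX]$ in ${\sf PsDbl}$ is $\underline{\Hom}(Y, X)$, so this composite is precisely the double functor~(\ref{dblfun}), which Theorem~\ref{3duniprop} asserts to be an isomorphism of double categories. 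This yields part~(i) directly.

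Part~(ii) will then follow by a general soft observation: the ``underlying ${\sf PsDbl}$-category'' construction, obtained by restricting a symmetric ${\sf PsDbl}$-multicategory to its unary hom-objects, is $2$-functorial and therefore sends adjunctions of symmetric ${\sf PsDbl}$-multicategories to adjunctions of ${\sf PsDbl}$-categories; applying it to part~(i) produces the required adjunction. For part~(iii), I would invoke Theorem~\ref{ordadjfunthm}, whose sole additional hypothesis is that $T$ be pronormal in the sense of Definition~\ref{pronormaldef}. This is immediate, since ${\sf Dbl}(\,;X)$ and ${\sf PsDbl}(\,;TX)$ are both canonically the underlying set of objects of the double category $X$, and $T$ acts as the identity between them.

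Finally, I would note that in all three cases the underlying ordinary adjunction of categories reproduces~(\ref{stadjdbl}), since the left adjoint produced by Lemma~\ref{adjfunlemma} sends $Y$ to $\st Y$ by construction and has unit $\eta$ by hypothesis.

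The main obstacle has already been settled by Theorem~\ref{3duniprop}, whose proof occupies the bulk of this section; once the three-dimensional universal property of strictification is in hand, the formal enriched-multicategorical machinery developed in Section~\ref{adjmultsect} assembles all three adjunctions essentially mechanically.
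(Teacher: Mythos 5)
Your proposal is correct and follows essentially the same route as the paper: verify the hypothesis of Theorem~\ref{adjfunthm} for the inclusion ${\sf Dbl} \lra {\sf PsDbl}$ by identifying the morphism~(\ref{kellynm}) with the double functor~(\ref{dblfun}) of Theorem~\ref{3duniprop}, pass to underlying ${\sf PsDbl}$-categories of unary morphisms for~(ii), invoke pronormality and Theorem~\ref{ordadjfunthm} for~(iii), and use pronormality again to see that the underlying adjunction is~(\ref{stadjdbl}). No gaps.
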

\begin{proof}
It follows from the three-dimensional universal property of strictification proved in Theorem \ref{3duniprop} that the inclusion  of symmetric multicategories ${\sf Dbl} \lra {\sf PsDbl}$ satisfies the hypothesis of Theorem \ref{adjfunthm}. Hence the induced symmetric ${\sf PsDbl}$-multifunctor, which is the inclusion of the symmetric ${\sf PsDbl}$-multicategory of double categories (i.e.\ the change of base along the inclusion ${\sf Dbl} \lra {\sf PsDbl}$ of the canonical self-enrichment of ${\sf Dbl}$) into the symmetric ${\sf PsDbl}$-multicategory of pseudo double categories (i.e.\ the canonical self-enrichment of ${\sf PsDbl}$), is the right adjoint of an adjunction (i) of symmetric ${\sf PsDbl}$-multicategories. 
The adjunction (ii) between the ${\sf PsDbl}$-categories of double categories and pseudo double categories can be obtained from the adjunction (i) by application of the $2$-functor ${\sf PsDbl}\text{-}\mathbf{SMult} \lra {\sf PsDbl}\text{-}\Cat$ that sends a  symmetric ${\sf PsDbl}$-multicategory to its underlying ${\sf PsDbl}$-category of unary morphisms.
 Since the inclusion of symmetric multicategories ${\sf Dbl} \lra {\sf PsDbl}$ is pronormal (in fact the functions (\ref{pronormal}) are identities), we have moreover that this inclusion satisfies the hypotheses of Theorem \ref{ordadjfunthm}, and hence is the right adjoint of an adjunction (iii) of symmetric multicategories.

To show that the strictification adjunction for pseudo double categories (\ref{stadjdbl}) underlies each of these adjunctions, it suffices to show that the inclusion of categories $\dbl \lra \psdbl$ is the underlying functor of their right adjoints. In each case, this is an immediate consequence of the fact that change of base along the inclusion of symmetric multicategories ${\sf Dbl} \lra {\sf PsDbl}$ is pronormal and hence preserves underlying categories.
\end{proof}

\begin{theorem} \label{fincor}
The strictification adjunction for bicategories {\normalfont(\ref{stadj})} underlies:
\begin{enumerate}[font=\normalfont, label=(\roman*)]
\item an adjunction of symmetric ${\sf PsDbl}$-multicategories,
\item an adjunction of ${\sf PsDbl}$-categories,
\item an adjunction of symmetric ${\sf Bicat}$-multicategories,
\item an adjunction of ${\sf Bicat}$-categories, and 
\item an adjunction of symmetric multicategories.
\end{enumerate}
\end{theorem}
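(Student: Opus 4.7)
The plan is to apply the machinery of Section \ref{adjmultsect} to the inclusion of symmetric multicategories ${\sf Gray} \hookrightarrow {\sf Bicat}$, mirroring the argument of the preceding theorem for pseudo double categories, and then to propagate the resulting structures to the ${\sf PsDbl}$-enriched cases by change of base along the inclusion ${\sf Bicat} \hookrightarrow {\sf PsDbl}$. The sole nontrivial input is the three-dimensional universal property of strictification, Corollary \ref{3duniprop2}; everything else is a direct assembly of Theorems \ref{adjfunthm} and \ref{ordadjfunthm}.

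For parts (iii) and (iv), take $SY = \st Y$ with unit $\eta_Y \colon Y \lra \st Y$ for each bicategory $Y$. For the inclusion $T \colon {\sf Gray} \lra {\sf Bicat}$, the composite morphism (\ref{kellynm}) is precisely the $2$-functor $\Ps(\st Y, X) \lra \Hom(\st Y, X) \lra \Hom(Y, X)$ of Corollary \ref{3duniprop2}, hence is an isomorphism. Theorem \ref{adjfunthm} then produces an adjunction $S \dashv \widehat{T}$ of symmetric ${\sf Bicat}$-multicategories, whose right adjoint is the evident inclusion of the symmetric ${\sf Bicat}$-multicategory of $2$-categories (i.e.\ the change of base along $T$ of the canonical self-enrichment of ${\sf Gray}$) into the symmetric ${\sf Bicat}$-multicategory of bicategories (i.e.\ the canonical self-enrichment of ${\sf Bicat}$); this is (iii). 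Applying the $2$-functor ${\sf Bicat}\text{-}\mathbf{SMult} \lra {\sf Bicat}\text{-}\Cat$ that sends a symmetric ${\sf Bicat}$-multicategory to its underlying ${\sf Bicat}$-category of unary morphisms yields (iv). For (v), one observes that ${\sf Gray} \hookrightarrow {\sf Bicat}$ is pronormal (automatic from fullness: the function ${\sf Gray}(\,\,;X) \lra {\sf Bicat}(\,\,;X)$ is an identity between the underlying sets of $X$), so that Theorem \ref{ordadjfunthm} delivers the ordinary adjunction of symmetric multicategories.

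For (i) and (ii), the full inclusion of symmetric multicategories ${\sf Bicat} \hookrightarrow {\sf PsDbl}$ (identifying bicategories with pseudo double categories whose underlying categories are discrete, as in Appendix \ref{appendix}) induces $2$-functors ${\sf Bicat}\text{-}\mathbf{SMult} \lra {\sf PsDbl}\text{-}\mathbf{SMult}$ and ${\sf Bicat}\text{-}\Cat \lra {\sf PsDbl}\text{-}\Cat$ by change of base. These send adjunctions to adjunctions, so (iii) and (iv) yield (i) and (ii) respectively. In each of the five cases, that the underlying category adjunction is the strictification adjunction (\ref{stadj}) follows from pronormality of the two inclusions involved (both automatic from fullness), since change of base along a pronormal symmetric multifunctor preserves underlying (multi)categories. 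Because the entire argument is a formal consequence of Corollary \ref{3duniprop2} together with the results of Section \ref{adjmultsect}, no substantive obstacle presents itself beyond the three-dimensional universal property itself, which has already been proved.
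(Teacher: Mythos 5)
Your handling of (iii)--(v) is correct and coincides with the route the paper itself offers as an alternative: apply Theorem \ref{adjfunthm} (and, for (v), Theorem \ref{ordadjfunthm}) to the inclusion ${\sf Gray} \lra {\sf Bicat}$, whose hypothesis (\ref{kellynm}) is exactly the isomorphism of Corollary \ref{3duniprop2}. One small correction there: ${\sf Gray}$ is \emph{not} a full sub-multicategory of ${\sf Bicat}$ (its $n$-ary morphisms are cubical functors, strict in each variable), so pronormality is not ``automatic from fullness''; it holds because a nullary morphism in either multicategory is simply an object of its codomain, so the comparison functions (\ref{pronormal}) are identities, just as the paper observes for ${\sf Dbl} \lra {\sf PsDbl}$.

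The genuine gap is in your derivation of (i) and (ii). Changing base along the inclusion ${\sf Bicat} \lra {\sf PsDbl}$ applied to the ${\sf Bicat}$-enriched adjunction of (iii)/(iv) produces a ${\sf PsDbl}$-enrichment whose hom-objects are the bicategories $\Hom(A,B)$ and $\Ps(\st A,B)$ regarded as \emph{vertically discrete} pseudo double categories. That is a degenerate enrichment which adds literally nothing to (iii) and (iv), whereas the content of (i) and (ii) is that the homs are the pseudo double categories $\underline{\Hom}(A,B)$ -- whose vertical morphisms are icons -- and the double categories $\underline{\Ps}(\st A,B)$; this richer structure is what the paper needs afterwards (recovering the strictification $2$-adjunction on icons by change of base along $\mathbf{U} \colon {\sf PsDbl} \lra {\sf Cat}$, and the adjoint biequivalence between $\underline{\Gray_3}$ and $\underline{\Bicat_3}$). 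This stronger enrichment cannot be bootstrapped from Corollary \ref{3duniprop2} alone: the icons/vertical transformations are invisible there, and one needs the full double-categorical universal property of Theorem \ref{3duniprop}, i.e.\ the isomorphism of double categories $\underline{\Ps}(\st A,B) \cong \underline{\Hom}(A,B)$, applied to the inclusion ${\sf Dbl} \lra {\sf PsDbl}$. In the paper, (i) and (ii) are obtained by restricting the adjunctions of the preceding theorem for pseudo double categories, using that the strictification of a (vertically discrete) bicategory is a $2$-category; the change of base connecting the two levels then goes in the opposite direction to yours, namely along $\mathbf{H} \colon {\sf PsDbl} \lra {\sf Bicat}$, to deduce (iii) and (iv) from (i) and (ii).
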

\begin{proof}
Since the strictification of a bicategory is a $2$-category (when seen as (pseudo) double categories with discrete underlying categories), the adjunctions (i), (ii), and (iii) of the previous theorem restrict to the adjunctions (i), (ii), and (v) of the present theorem. The adjunctions (iii) and (iv) can be obtained from the adjunctions (i) and (ii) by change of base along the symmetric multifunctor $\mathbf{H} \colon {\sf PsDbl} \lra {\sf Bicat}$ of (\ref{appadj}) that sends a pseudo double category to its underlying bicategory, or alternatively from Theorem \ref{adjfunthm} and Corollary \ref{3duniprop2} by the argument of the previous theorem applied to the inclusion of symmetric multicategories ${\sf Gray} \lra {\sf Bicat}$. It again follows from the pronormality of this inclusion that
the strictification adjunction for bicategories (\ref{stadj}) underlies each of the adjunctions (i)--(v) in the statement.
\end{proof}

\begin{remark}
We recover the strictification $2$-adjunction (between the $2$-categories $\twocat_2$ and $\Bicat_2$ whose $2$-cells are icons) of \cite[Theorem 4.1]{MR3023916} as the change of base of the ${\sf PsDbl}$-enriched strictification adjunction of Theorem \ref{fincor}(ii) along the symmetric multifunctor $\mathbf{U} \colon {\sf PsDbl} \lra {\sf Cat}$ that sends a pseudo double category to its underlying category.
\end{remark}

\begin{remark}
Compare Theorem \ref{fincor}(iii) with \cite[Remark 5.8]{MR3023916}, where it is stated that one could show that the strictification functor is a symmetric monoidal functor (in some tricategorical sense) between symmetric monoidal tricategories. To actually show this (and indeed to make the necessary definitions)  would be no mean feat, whereas we have captured the same three-dimensional structure of the strictification functor within the framework of ordinary enriched category theory, namely as a symmetric multifunctor between symmetric multicategories enriched over the symmetric multicategory ${\sf Bicat}$.
\end{remark}

We end this section with a couple of applications of Theorem \ref{fincor}. Change of base along the adjunction of symmetric multicategories of Theorem \ref{fincor}(v) induces an adjunction between the categories of $\Gray$-categories and ${\sf Bicat}$-categories,
\begin{equation*}
\cd{
\mathbf{Gray}\text{-}\Cat \ar@<-1.5ex>[rr]^-{\hdash}_-{} && \ar@<-1.5ex>[ll]_-{\st_{\ast}} {\sf Bicat}\text{-}\Cat
}
\end{equation*}
 whose left adjoint sends a ${\sf Bicat}$-category $\mathcal{A}$ to its change of base $\Gray$-category $\st_{\ast}\mathcal{A}$ along the symmetric multifunctor $\st \colon {\sf Bicat} \lra {\sf Gray}$ (recall that ${\sf Gray}$ is represented as a symmetric multicategory by the symmetric monoidal category $\Gray$). The component of the unit of this adjunction at a ${\sf Bicat}$-category $\mathcal{A}$ is the ${\sf Bicat}$-functor $\eta_{\ast} \colon \mathcal{A} \lra \st_{\ast}\mathcal{A}$, which is the identity on objects and is given on homs by the identity-on-objects biequivalences \linebreak $\eta \colon \mathcal{A}(A,B) \lra \st\,\mathcal{A}(A,B)$, and is therefore a triequivalence. In particular, taking $\mathcal{A}$ to be the self-enrichment of the symmetric closed multicategory ${\sf Bicat}$, which we denote by $\Bicat_3$, yields the following result.
 
\begin{proposition} \label{newgray}
The category of bicategories and pseudofunctors underlies a $\Gray$-category $\st_{\ast}\Bicat_3$ with hom $2$-categories $(\st_{\ast}\Bicat_3)(A,B) = \st\,\Hom(A,B)$, which is triequivalent to the ${\sf Bicat}$-category $\Bicat_3$ of bicategories via an identity-on-underlying-categories ${\sf Bicat}$-functor $\eta_{\ast} \colon \Bicat_3 \lra \st_{\ast}\Bicat_3$.
\end{proposition}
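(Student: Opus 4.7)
The plan is to apply to $\mathcal{A} = \Bicat_3$ the general construction sketched in the paragraph preceding the proposition: the change-of-base adjunction $\st_{\ast} \dashv (\hdash) \colon \mathbf{Gray}\text{-}\Cat \rightleftarrows {\sf Bicat}\text{-}\Cat$ induced by Theorem~\ref{fincor}(v), together with the observation that its unit at any ${\sf Bicat}$-category $\mathcal{A}$ is given on homs by the strictification units $\eta \colon \mathcal{A}(A,B) \lra \st\,\mathcal{A}(A,B)$.

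First I would unpack $\st_{\ast}\Bicat_3$: by definition of the change of base along the symmetric multifunctor $\st \colon {\sf Bicat} \lra {\sf Gray}$, its objects are those of $\Bicat_3$ (bicategories), its hom objects are $\st\,\Hom(A,B)$ --- these are genuine $2$-categories because, as observed in the proof of Theorem~\ref{fincor}, the strictification of a bicategory (viewed as a pseudo double category with discrete underlying category) is again a $2$-category --- and its composition and identities are obtained by applying $\st$ to those of $\Bicat_3$. To identify the underlying category as $\Bicat$, I would verify that $\st \colon {\sf Bicat} \lra {\sf Gray}$ is pronormal in the sense of Definition~\ref{pronormaldef}, either directly (since strictification does not alter the set of objects, which is the underlying set of an object in both ${\sf Bicat}$ and ${\sf Gray}$) or by invoking Proposition~\ref{pronormalprop} for the adjunction of Theorem~\ref{fincor}(v); pronormality then guarantees that change of base along $\st$ preserves underlying categories.

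Finally I would take $\eta_{\ast} \colon \Bicat_3 \lra \st_{\ast}\Bicat_3$ to be the component at $\Bicat_3$ of the unit of the adjunction $\st_{\ast} \dashv (\hdash)$: by construction its hom components are the strictification units $\eta \colon \Hom(A,B) \lra \st\,\Hom(A,B)$, and by pronormality it is identity on underlying categories. The main remaining step is to conclude that $\eta_{\ast}$ is a triequivalence. Since $\eta_{\ast}$ is bijective on objects it suffices to show that each hom component is a biequivalence, which reduces to the standard fact that for any bicategory $A$ the unit pseudofunctor $\eta_A \colon A \lra \st A$ is a biequivalence: it is identity on objects and locally an equivalence, since every path $(f_1,\ldots,f_n)$ in $\st A$ is linked by the canonical invertible globular $2$-cell $\kappa$ (introduced before Theorem~\ref{3duniprop}) to the unary path $(\varepsilon(f_1,\ldots,f_n))$ lying in the image of $\eta_A$. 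The only mild subtlety is reconciling the notion of triequivalence for ${\sf Bicat}$-functors with the classical one for tricategories, but with bijection on objects and local biequivalence on hom bicategories the two agree.
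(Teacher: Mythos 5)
Your argument is correct and is essentially the paper's own: the proposition is obtained exactly as you describe, by applying the change-of-base adjunction induced by Theorem~\ref{fincor}(v) to $\mathcal{A}=\Bicat_3$, noting that $\st$ preserves underlying sets so that the underlying category is $\Bicat$, and observing that the unit $\eta_{\ast}$ is given on homs by the identity-on-objects biequivalences $\eta\colon\Hom(A,B)\lra\st\,\Hom(A,B)$, hence is a triequivalence. One small caveat: Proposition~\ref{pronormalprop} concerns \emph{right} adjoints (here the inclusion ${\sf Gray}\lra{\sf Bicat}$), so it does not directly yield pronormality of the left adjoint $\st$; your direct verification (strictification does not change the set of objects) is the correct justification and suffices.
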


Explicitly,  $\st_{\ast}\Bicat_3$ is the $\Gray$-category whose objects are bicategories, whose morphisms are pseudofunctors, whose $2$-cells are ``vertical paths'' of pseudonatural transformations, and whose $3$-cells are modifications between the vertical composites of such paths. For bicategories $A$, $B$, and $C$, the composition $2$-functor out of the Gray tensor product
\begin{equation*}
\st\,\Hom(B,C) \otimes \st\,\Hom(A,B) \lra \st\,\Hom(A,C)
\end{equation*}
is the image of the composition two-variable pseudofunctor 
\begin{equation*}
(\Hom(B,C),\Hom(A,B)) \lra \Hom(A,C)
\end{equation*}
under the symmetric multifunctor $\st \colon {\sf Bicat} \lra {\sf Gray}$. Hence the interchange constraint (\ref{cinqgray})  for a ``horizontally composable'' pair of $2$-cells  
\begin{equation*}
(\alpha_i \colon f_{i-1} \lra f_i \colon A \lra B)_{1\leq i\leq n} \quad  \quad (\beta_j \colon g_{j-1} \lra g_j \colon B \lra C)_{1\leq j\leq m}
\end{equation*}
in the $\Gray$-category $\st_{\ast}\Bicat_3$ is the invertible $2$-cell 
 \begin{equation*}
 (g_0\alpha_1,\ldots,g_0\alpha_n,\beta_1f_n,\ldots,\beta_mf_n) \lra (\beta_1f_0,\ldots,\beta_mf_0,g_m\alpha_1,\ldots,g_m\alpha_n)
 \end{equation*}
  in the $2$-category $\st\,\Hom(A,C)$ represented by the following pasting composite in the bicategory $\Hom(A,C)$, 
\begin{equation*}
\cd[@=3.5em]{
g_0f_0 \dtwocell{dr}{\beta_1\alpha_1} \ar[d]_-{\beta_1f_0} \ar[r]^-{g_0\alpha_1} & g_0f_1  \ar@{..}[r] \ar[d]^-{\beta_1f_1} & g_0f_{n-1} \dtwocell{dr}{\beta_1\alpha_n} \ar[d]_-{\beta_1f_{n-1}} \ar[r]^-{g_0\alpha_n} & g_0f_n \ar[d]^-{\beta_1f_n} \\
g_1f_0  \ar@{..}[d] \ar[r]_-{g_1\alpha_1} & g_1f_1 \ar@{..}[dr] \ar@{..}[r]  \ar@{..}[d] & g_1f_{n-1} \ar[r]_-{g_1\alpha_n}  \ar@{..}[d]  & g_1f_n  \ar@{..}[d] \\
g_{m-1}f_0  \dtwocell{dr}{\beta_m\alpha_1} \ar[d]_-{\beta_mf_0} \ar[r]^-{g_{m-1}\alpha_1} & g_{m-1}f_1 \ar@{..}[r] \ar[d]^-{\beta_mf_1} & g_{m-1}f_{n-1} \dtwocell{dr}{\beta_m\alpha_n} \ar[d]_-{\beta_mf_{n-1}} \ar[r]^-{g_{m-1}\alpha_n} & g_{m-1}f_n \ar[d]^-{\beta_mf_n} \\
g_{m}f_0 \ar[r]_-{g_{m}\alpha_1} & g_{m}f_1 \ar@{..}[r] & g_{m}f_{n-1} \ar[r]_-{g_{m}\alpha_n} & g_{m}f_n \\
}
\end{equation*}
where the $\beta_j\alpha_i$ denote the interchange constraints (\ref{canmod}) of the ${\sf Bicat}$-category $\Bicat_3$. 
The identity-on-underlying-categories triequivalence ${\sf Bicat}$-functor $\eta_{\ast} \colon \Bicat_3 \lra \st_{\ast}\Bicat_3$ sends a pseudonatural transformation to the unary vertical path it comprises.
 Working instead in  the setting of tricategory theory, this defines an identity-on-objects, identity-on-morphisms triequivalence trihomomorphism from the tricategory of bicategories to the $\Gray$-category $\st_{\ast}\Bicat_3$ of bicategories. 

Finally, recall that the strictification  trihomomorphism from the tricategory of bicategories to the $\Gray$-category of $2$-categories restricts to a triequivalence between the tricategory of bicategories and the full sub-$\Gray$-category of the $\Gray$-category of $2$-categories on the cofibrant $2$-categories (see \cite[Theorem 8.21]{MR3076451}). Using the ${\sf PsDbl}$-enriched adjunction  of Theorem \ref{fincor}(ii), we can promote this to a \emph{biequivalence} of ${\sf PsDbl}$-categories, i.e.\ a ${\sf PsDbl}$-functor that is surjective on objects up to equivalence and is an equivalence on the hom pseudo double categories;  by definition, a morphism in a ${\sf PsDbl}$-category is an equivalence if it is an equivalence in the underlying $2$-category.  Recall that a $2$-category (and more generally a double category) is said to be \emph{cofibrant} if its (horizontal) underlying category is free on a graph \cite{MR1931220,MR2449004}. Let us denote the ${\sf Dbl}$-category of $2$-categories and the ${\sf PsDbl}$-category of bicategories by $\underline{\Gray_3}$ and $\underline{\Bicat_3}$ respectively.

\begin{proposition}
The ${\sf PsDbl}$-enriched strictification adjunction of Theorem {\normalfont \ref{fincor}(ii)}
\begin{equation*}
\cd{
\underline{\Gray_3} \ar@<-1.5ex>[rr]^-{\hdash}_-{} && \ar@<-1.5ex>[ll]_-{\st} \underline{\Bicat_3}
}
\end{equation*}
restricts to an adjoint biequivalence between the ${\sf PsDbl}$-category $\underline{\Bicat_3}$ of bicategories and the full sub-${\sf Dbl}$-category of $\underline{\Gray_3}$ on the cofibrant $2$-categories. 
\end{proposition}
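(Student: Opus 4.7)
The plan is to apply the standard criterion: an adjunction in a $2$-category of enriched categories restricts to an adjoint equivalence between a full sub-category $\mathcal{B}_0$ of $\mathcal{B}$ and $\mathcal{A}$ exactly when the left adjoint factors through $\mathcal{B}_0$ and both the unit and the (restricted) counit are enriched equivalences. Here the enrichment is over ${\sf PsDbl}$, and by the definition of equivalence in a ${\sf PsDbl}$-category recalled just before the statement, this unpacks in $\underline{\Bicat_3}$ to a classical biequivalence of bicategories and in $\underline{\Gray_3}$ to a $2$-functor biequivalence admitting a $2$-functor pseudoinverse. So three items need to be verified: (a) $\st A$ is cofibrant for every bicategory $A$; (b) the unit $\eta_A \colon A \lra \st A$ is a biequivalence in $\underline{\Bicat_3}$; and (c) the counit $\varepsilon_B \colon \st B \lra B$ admits a $2$-functor pseudoinverse for every cofibrant $2$-category $B$.

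Items (a) and (b) are essentially formal. For (a), the horizontal morphisms of $\st A$ are by definition paths $(f_1,\ldots,f_n)$ in $A$, so its (horizontal) underlying category is free on the graph of horizontal morphisms of $A$, and hence $\st A$ is cofibrant in the sense of \cite{MR1931220,MR2449004}. For (b), I would exhibit a pseudofunctor $e_A \colon \st A \lra A$ sending a path $(f_1,\ldots,f_n)$ to its iterated composite $\varepsilon(f_1,\ldots,f_n)$ and acting on cells of the form (\ref{everycell}) via the underlying cell $\alpha$ in $A$, with unit and composition constraints supplied by the canonical invertible globular cells $\kappa$; the pseudonatural equivalences $e_A \circ \eta_A \simeq 1_A$ and $\eta_A \circ e_A \simeq 1_{\st A}$ are then witnessed by these same $\kappa$'s. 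This is the classical strictification coherence theorem.

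The main obstacle is (c). The triangle identity $\varepsilon_B \circ \eta_B = 1_B$, combined with the biequivalence property of $\eta_B$ from (b), already exhibits $\varepsilon_B$ as a pseudofunctor biequivalence with pseudofunctor section $\eta_B$; the real task is to \emph{strictify the section}, i.e.\ to produce a $2$-functor $\widetilde{\eta_B} \colon B \lra \st B$ pseudonaturally equivalent to $\eta_B$. This is exactly the defining property of cofibrant objects in Lack's model structure on $\twocat$ (see \cite{MR1931220}): for any cofibrant $2$-category $B$ and any $2$-category $C$, the inclusion $\Ps(B,C) \hookrightarrow \Hom(B,C)$ of $2$-functors into pseudofunctors is a biequivalence of $2$-categories, so every pseudofunctor $B \lra C$ is pseudonaturally equivalent to a $2$-functor. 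Applying this with $C = \st B$ to the pseudofunctor $\eta_B$ yields the desired $\widetilde{\eta_B}$, and the computations $\varepsilon_B \circ \widetilde{\eta_B} \simeq \varepsilon_B \circ \eta_B = 1_B$ and $\widetilde{\eta_B} \circ \varepsilon_B \simeq \eta_B \circ \varepsilon_B \simeq 1_{\st B}$ (the latter using that $\eta_B$ is itself a biequivalence) complete the construction of the $2$-functor pseudoinverse of $\varepsilon_B$, and hence the proof.
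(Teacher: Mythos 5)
Your proposal is correct and follows the same overall decomposition as the paper: show $\st A$ is cofibrant (because its underlying category is free on the underlying graph of $A$), show the unit components are equivalences, and show the counit components at cofibrant $2$-categories are equivalences. The only real divergence is in the counit step: the paper simply observes that $\varepsilon_B$ is a bijective-on-objects biequivalence whose domain and codomain are both cofibrant, and invokes the standard consequence of Lack's model structure that such a biequivalence is an equivalence (admits a $2$-functor pseudoinverse); you instead reach the same conclusion by strictifying the pseudofunctor section $\eta_B$ -- using the cofibrancy fact that $\Ps(B,C) \hookrightarrow \Hom(B,C)$ is a biequivalence, so $\eta_B$ is pseudonaturally equivalent to a $2$-functor $\widetilde{\eta_B}$ -- and then deducing $\varepsilon_B\widetilde{\eta_B}\simeq 1_B$ and $\widetilde{\eta_B}\varepsilon_B\simeq 1_{\st B}$ from the triangle identity and the fact that $\eta_B$ is a biequivalence. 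Both routes rest on closely related standard inputs from Lack's theory of cofibrant $2$-categories; yours is slightly more self-contained in that it constructs the pseudoinverse explicitly, while the paper's is shorter by quoting the ``biequivalence between cofibrant $2$-categories is an equivalence'' principle directly. Your explicit construction of $e_A$ with the $\kappa$'s in step (b) likewise just unpacks what the paper cites as the known fact that $\eta_A$ is a bijective-on-objects biequivalence.
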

\begin{proof}
Each component $\eta_A \colon A \lra \st A$ of the unit of this adjunction is a bijective-on-objects biequivalence, and is therefore an equivalence in the ${\sf PsDbl}$-category of bicategories. Furthermore, the strictification of a bicategory $A$ is a cofibrant $2$-category (since its underlying category is free on the underlying graph of $A$), and the component of the counit of this adjunction at a cofibrant $2$-category is a bijective-on-objects biequivalence with cofibrant domain and codomain, and is therefore an equivalence in the ${\sf Dbl}$-category of $2$-categories.
\end{proof}

Moreover, the same argument  shows that the ${\sf PsDbl}$-category of pseudo double categories is biequivalent to the full sub-${\sf Dbl}$-category of the ${\sf Dbl}$-category of double categories on the cofibrant double categories.

\appendix

\section{The multicategory of pseudo double categories} \label{appendix}
This appendix introduces the symmetric closed multicategory $\sf{PsDbl}$ of pseudo double categories, which generalises Verity's symmetric closed multicategory of bicategories \cite[\S 1.3]{MR2844536}.
We refer to the papers \cite{MR1716779} and \cite{MR2303000} (and the references contained therein) for the basic theory of pseudo double categories. Recall that every bicategory gives rise to a pseudo double category whose underlying category is discrete, with which we identify the bicategory. Note that we follow the convention that the ``weak'' direction in a pseudo double category is the horizontal,  so that the underlying category $A_0$ of a pseudo double category $A$ consists of its objects and vertical morphisms. 
We do not distinguish notationally between the arrows denoting the horizontal and vertical morphisms of a pseudo double category, though we will have occasion to denote the vertical and horizontal identities of an object $a$ by $1_a^v$ and $1_a^h$ respectively. 

We could proceed directly to define and prove the symmetric closed multicategory structure and axioms for $\sf{PsDbl}$, however this would be a rather lengthy affair. In fact it is possible to give a more streamlined definition and proof, for there is a significant amount of redundancy in the definition of a symmetric closed multicategory: for example, the multimorphisms are completely determined by the nullary morphisms and the internal hom objects. Inspired by \cite[\S4.2]{Bourke2015}, we will define $\sf{PsDbl}$ to be the symmetric closed multicategory of weak morphisms  of a certain symmetric skew closed structure on the category $\psdbl_{\mathrm{s}}$ of pseudo double categories and strict double functors.

A symmetric skew closed structure on a category $\mathcal{C}$ consists of an internal hom functor $[-,-] \colon \mathcal{C}^\mathrm{op} \times \mathcal{C} \lra \mathcal{C}$, a unit object $I$, natural transformations $L \colon [Y,Z] \lra [[X,Y],[X,Z]]$, $i \colon [I,X] \lra X$, and $j \colon I \lra [X,X]$, and a symmetry natural isomorphism \linebreak $s \colon [X,[Y,Z]] \cong [Y,[X,Z]]$, subject to axioms (see \cite[\S2]{MR3010098} and \cite[Definition 3.1]{bourkelackbraid}).
Associated to any symmetric skew closed category $\mathcal{C}$ is a symmetric closed multicategory ${\sf C}$, which we call its \emph{symmetric closed multicategory of weak morphisms}, whose objects and  internal hom objects are the same as those of $\mathcal{C}$, and whose sets of multimorphisms ${\sf C}(X_1,\ldots,X_n;Y)$ are defined recursively as follows: for $n=0$ define ${\sf C}(\,\,;Y) = \mathcal{C}(I,Y)$, and for $n \geq 1$ define ${\sf C}(X_1,\ldots,X_n;Y) = {\sf C}(X_1,\ldots,X_{n-1};[X_n,Y])$. The remainder of the symmetric closed multicategory structure and axioms follow from those of the symmetric skew closed category in a straightforward manner. Moreover, this defines a $2$-functor from the $2$-category of symmetric skew closed categories, symmetric closed functors, and closed natural transformations to the $2$-category of symmetric multicategories, symmetric multifunctors, and multinatural transformations. (See \cite{bourkelackskewmult} for an abstract proof of the non-symmetric version of this statement; see also \cite{MR2909641} for a more concrete proof of the non-symmetric non-skew version).

For each pair of pseudo double categories $A$ and $B$, let $\underline{\Hom}(A,B)$ denote the pseudo double category whose objects are pseudo double functors $A \lra B$, whose vertical morphisms are vertical transformations, whose horizontal morphisms are pseudo horizontal transformations, and whose cells are modifications. Presently, we shall show that these hom pseudo double categories are the internal hom objects of a symmetric skew closed structure on $\psdbl_{\mathrm{s}}$ whose unit object $I$ represents the functor $\mathrm{ob} \colon \psdbl_{\mathrm{s}} \lra \Set$, and hence also the internal hom objects of a symmetric closed multicategory $\sf{PsDbl}$ of pseudo double categories whose sets of binary morphisms are defined as  ${\sf PsDbl}(A,B;C) = \mathrm{ob}\,\underline{\Hom}(A,\underline{\Hom}(B,C))$. It will be convenient to have an explicit description of these binary morphisms, and of the pseudo double categories $\underline{\Hom}(A,\underline{\Hom}(B,C))$ they form.

Note that if $A$ and $B$ are bicategories, then $\underline{\Hom}(A,B)$ is the pseudo double category whose objects are pseudofunctors $A \lra B$, whose vertical morphisms are icons, whose horizontal morphisms are pseudonatural transformations, and whose cells are ``modification squares''.

\begin{definition} \label{deffuntwovar}
A \emph{pseudo double functor of two variables} $F \colon (A,B) \lra C$ consists of the following data:
\begin{enumerate}[(i)]
\item a functor $F \colon A_0 \times B_0 \lra C_0$,
\item for each object $a \in A$, a pseudo double functor $F(a,-) \colon B \lra C$ agreeing with the functor (i) on underlying categories, 
\item for each object $b \in B$, a pseudo double functor $F(-,b) \colon A \lra C$ agreeing with the functor (i) on underlying categories, 
\item for each vertical morphism $u \colon a \lra b$ in $A$ and each horizontal morphism $g \colon c \lra d$ in $B$, a cell $F(u,g)$ in $C$ as in the left of (\ref{threecells}),
\item for each horizontal morphism $f \colon a \lra b$ in $A$ and each vertical morphism $v \colon c \lra d$ in $B$, a cell $F(f,v)$ in $C$ as in the centre of (\ref{threecells}),
\item for each horizontal morphism $f \colon a \lra b$ in $A$ and each horizontal morphism $g \colon c \lra d$ in $B$, an invertible globular cell $F(f,g)$ in $C$ as in the right of (\ref{threecells}),
\begin{equation} \label{threecells}
\cd[@C=2.5em]{
F(a,c) \ar[r]^-{F(a,g)} \ar[d]_-{F(u,c)} \dtwocell{dr}{F(u,g)} & F(a,d) \ar[d]^-{F(u,d)} \\
F(b,c) \ar[r]_-{F(b,g)} & F(b,d)
}
\quad
\cd[@C=2.5em]{
F(a,c) \ar[r]^-{F(f,c)} \ar[d]_-{F(a,v)} \dtwocell{dr}{F(f,v)} & F(b,c) \ar[d]^-{F(b,v)} \\
F(a,d) \ar[r]_-{F(f,d)} & F(b,d)
}
\quad
\cd[@C=2.5em]{
F(a,c) \ar[r]^-{F(a,g)} \ar[d]_-{F(f,c)} \dtwocell{dr}{F(f,g)} & F(a,d) \ar[d]^-{F(f,d)} \\
F(b,c) \ar[r]_-{F(b,g)} & F(b,d)
}
\end{equation}
\end{enumerate}
subject to the following axioms:
\begin{enumerate}[resume*]
\item for each vertical morphism $u \colon a \lra b$ in $A$, the above data define a vertical transformation $F(u,-) \colon F(a,-) \lra F(b,-) \colon B \lra C$,
\item for each vertical morphism $v \colon c \lra d$ in $B$, the above data define a vertical transformation $F(-,v) \colon F(-,c) \lra F(-,d) \colon A \lra C$,
\item for each horizontal morphism $f \colon a \lra b$ in $A$, the above data define a pseudo horizontal transformation $F(f,-) \colon F(a,-) \lra F(b,-) \colon B \lra C$,
\item for each horizontal morphism $g \colon c \lra d$ in $B$, the above data define a pseudo horizontal transformation $F(-,g) \colon F(-,c) \lra F(-,d) \colon A \lra C$.
\end{enumerate}
\end{definition}

\begin{remark} \label{twovarrmk}
The $n$-ary morphisms of the multicategory $\sf{PsDbl}$ are ``pseudo double functors of $n$ variables'', which consist of a similar set of data subject to a similar set of axioms, and for  $n \geq 3$ are subject to a further set of ``cubical'' axioms involving triples of morphisms. In particular, a nullary morphism is precisely an object of its codomain, and a unary morphism is a pseudo double functor. Specialising to those pseudo double categories whose underlying categories are discrete, i.e.\ to bicategories, these are precisely the ``strong $n$-homomorphisms'' of \cite[\S 1.3]{MR2844536}, which in turn generalise the ``iso-quasi-functors of $n$-variables'' of \cite[\S4.24]{MR0371990} (later renamed ``cubical functors'' \cite[\S 4.1]{MR1261589}).
\end{remark}

\begin{definition} \label{vertdef}
Let $F,G \colon (A,B) \lra C$ be pseudo double functors of two variables. A \emph{vertical transformation of two variables} $\sigma \colon F \lra G$ consists of the following data:
\begin{enumerate}[(i)]
\item a natural transformation $\sigma \colon F \lra G \colon A_0 \times B_0 \lra C_0$,
\item for each object $a \in A$, a vertical transformation $\sigma(a,-) \colon F(a,-) \lra G(a,-) \colon B \lra C$ agreeing with the natural transformation (i) on underlying categories,
\item for each object $c \in B$, a vertical transformation $\sigma(-,c) \colon F(-,c) \lra G(-,c) \colon A \lra C$ agreeing with the natural transformation (i) on underlying categories,
\end{enumerate}
subject to the following axioms:
\begin{enumerate}[resume*]
\item for each horizontal morphism $f \colon a \lra b$ in $A$, the above data define a modification $\sigma(f,-)$ as in (\ref{twomods}), 
\item for each horizontal morphism $g \colon c \lra d$ in $B$, the above data define a modification $\sigma(-,g)$ as in  (\ref{twomods}).
\end{enumerate}
\begin{equation} \label{twomods}
\cd[@C=2.5em]{
F(a,-) \ar[r]^-{F(f,-)} \ar[d]_-{\sigma(a,-)} \dtwocell{dr}{\sigma(f,-)} & F(b,-) \ar[d]^-{\sigma(b,-)} \\
G(a,-) \ar[r]_-{G(f,-)} & G(b,-)
}
\qquad
\cd[@C=2.5em]{
F(-,c) \ar[r]^-{F(-,g)} \ar[d]_-{\sigma(-,c)} \dtwocell{dr}{\sigma(-,g)} & F(-,d) \ar[d]^-{\sigma(-,d)} \\
G(-,c) \ar[r]_-{G(-,g)} & G(-,d)
}
\end{equation}
\end{definition}

\begin{remark}
One can define similarly a notion of ``vertical transformation of $n$ variables''; unlike in the previous remark, no further set of axioms is required for the general case. Note that a vertical transformation of zero variables is simply a vertical morphism. Specialising to bicategories  yields a notion of \emph{multivariable icon}.
\end{remark}

\begin{definition} \label{hordef}
Let $F,G \colon (A,B) \lra C$ be pseudo double functors of two variables. A \emph{pseudo horizontal transformation of two variables} $\theta \colon F \lra G$ consists of the following data:
\begin{enumerate}[(i)]
\item for each pair of objects $(a,c) \in \mathrm{ob} A \times \mathrm{ob} B$, a horizontal morphism $\theta(a,c) \colon F(a,c) \lra G(a,c)$ in $C$,
\item for each object $a \in A$, a pseudo horizontal transformation $\theta(a,-)\colon F(a,-) \lra G(a,-)$ agreeing with (i) on objects,
\item for each object $c \in B$, a pseudo horizontal transformation $\theta(-,c)\colon F(-,c) \lra G(-,c)$ agreeing with (i) on objects,
\end{enumerate}
subject to the following axioms:
\begin{enumerate}[resume*]
\item for each vertical morphism $u \colon a \lra b$ in $A$, the above data define a modification $\theta(u,-)$ as in (\ref{twomoremods}),
\item for each vertical morphism $v \colon c \lra d$ in $B$, the above data define a modification $\theta(-,v)$ as in (\ref{twomoremods}),
\end{enumerate}
\begin{equation} \label{twomoremods}
\cd[@C=2.5em]{
 F(a,-) \ar[r]^-{\theta(a,-)} \ar[d]_-{F(u,-)} \dtwocell{dr}{\theta(u,-)} & G(a,-) \ar[d]^-{G(u,-)} \\
F(b,-) \ar[r]_-{\theta(b,-)} & G(b,-)
}
\qquad
\cd[@C=2.5em]{
F(-,c) \ar[r]^-{\theta(-,c)} \ar[d]_-{F(-,v)} \dtwocell{dr}{\theta(-,v)} & G(-,c) \ar[d]^-{G(-,v)} \\
F(-,d) \ar[r]_-{\theta(-,d)} & G(-,d)
}
\end{equation}
\begin{enumerate}[resume*]
\item for each horizontal morphism $f \colon a \lra b$ in $A$, the above data define an invertible modification $\theta(f,-)$ as in (\ref{yettwomoremods}),
\item for each horizontal morphism $g \colon c \lra d$ in $B$, the above data define an invertible modification $\theta(-,g)$ as in (\ref{yettwomoremods}).
\end{enumerate}
\begin{equation} \label{yettwomoremods}
\cd[@C=2.5em]{
F(a,-) \ar[r]^-{F(f,-)} \ar[d]_-{\theta(a,-)} \dtwocell{dr}{\theta(f,-)} & F(b,-) \ar[d]^-{\theta(b,-)} \\
G(a,-) \ar[r]_-{G(f,-)} & G(b,-)
}
\qquad
\cd[@C=2.5em]{
F(-,c) \ar[r]^-{F(-,g)} \ar[d]_-{\theta(-,c)} \dtwocell{dr}{\theta(-,g)} & F(-,d) \ar[d]^-{\theta(-,d)} \\
G(-,c) \ar[r]_-{G(-,g)} & G(-,d)
}
\end{equation}
\end{definition}

\begin{remark}
One can define similarly a notion of ``pseudo horizontal transformation of $n$ variables''; as in the previous remark, no further set of axioms is required for the general case. Specialising to bicategories yields a notion of ``pseudonatural transformation of $n$ variables'', which in turn generalises the ``iso-quasi-natural transformations'' of $n$ variables of \cite[\S4.24]{MR0371990}.
\end{remark}

\begin{definition} \label{moddef}
Let $\sigma \colon F \lra H$ and $\tau \colon G \lra K$ be vertical transformations of two variables, and let $\theta \colon F \lra G$ and $\varphi \colon H \lra K$ be pseudo horizontal transformations of two variables. A \emph{modification of two variables} $m$ as in (\ref{modtwovarcomp}) consists of a cell $m(a,b)$ in $C$ for each pair of objects $(a,b) \in \mathrm{ob} A \times \mathrm{ob} B$ as in (\ref{modtwovarcomp}), such that $m(a,b)$ defines a modification in each variable.
\begin{equation} \label{modtwovarcomp}
\cd{
F \ar[r]^-{\theta} \ar[d]_-{\sigma} \dtwocell{dr}{m} & G \ar[d]^-{\tau} \\
H \ar[r]_-{\varphi} & K
}
\qquad 
\qquad
\cd[@C=2.5em]{
F(a,b) \ar[r]^-{\theta(a,b)} \ar[d]_-{\sigma(a,b)} \dtwocell{dr}{m(a,b)} & G(a,b) \ar[d]^-{\tau(a,b)} \\
H(a,b) \ar[r]_-{\varphi(a,b)} & K(a,b)
}
\end{equation}
\end{definition}

It is a simple matter of unwinding the definitions to verify that the collection of pseudo double functors of two variables $(A,B) \lra C$, together with the two-variable vertical transformations, pseudo horizontal transformations, and modifications between them, forms a pseudo double category isomorphic to $\underline{\Hom}(A,\underline{\Hom}(B,C))$.

\begin{proposition} \label{skewprop}
There exists a symmetric skew closed structure on the category $\psdbl_{\mathrm{s}}$ of pseudo double categories and strict double functors whose unit object $I$ represents the functor $\mathrm{ob} \colon \psdbl_{\mathrm{s}} \lra \Set$ and whose internal hom objects are the pseudo double categories $\underline{\Hom}(A,B)$.
\end{proposition}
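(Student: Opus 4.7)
The plan is to construct the data of a symmetric skew closed structure in pieces and verify the axioms, exploiting throughout the explicit two-variable description of $\underline{\Hom}(A,\underline{\Hom}(B,C))$ established just before the statement. First, I take the unit object $I$ to be the terminal pseudo double category, consisting of a single object equipped with its required identity vertical and horizontal morphisms and identity cell. A strict double functor $I\lra A$ evidently picks out precisely an object of $A$, so $I$ represents the functor $\mathrm{ob}$, naturally in $A$.

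Next, I verify that $\underline{\Hom}$ is functorial on $\psdbl_{\mathrm{s}}^{\mathrm{op}} \times \psdbl_{\mathrm{s}}$: for strict double functors $F \colon A' \lra A$ and $G \colon B \lra B'$, postcomposition by $G$ and precomposition by $F$ send pseudo double functors, vertical and pseudo horizontal transformations, and modifications to the same, and the resulting map $\underline{\Hom}(A,B) \lra \underline{\Hom}(A',B')$ is \emph{strict} because $F$ and $G$ strictly preserve all pseudo double category structure, so the unit and composition constraints of all composites coincide with the originals. The nullary maps $i_X \colon [I,X] \lra X$ and $j_X \colon I \lra [X,X]$ are then given by evaluation at the unique object of $I$ and by picking out the identity pseudo double functor on $X$ respectively, both clearly strict and natural. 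The map $L \colon [Y,Z] \lra [[X,Y],[X,Z]]$ sends a pseudo double functor $G \colon Y \lra Z$ to postcomposition $G \circ - \colon [X,Y] \lra [X,Z]$ and is defined analogously on transformations and modifications by whiskering.

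The crucial structure map is the symmetry isomorphism $s \colon [X,[Y,Z]] \cong [Y,[X,Z]]$: here I invoke the explicit description recalled before the proposition, which identifies $[X,[Y,Z]]$ with the pseudo double category whose objects, vertical morphisms, horizontal morphisms, and cells are the pseudo double functors of two variables $(X,Y) \lra Z$ together with the two-variable vertical transformations, pseudo horizontal transformations, and modifications of Definitions \ref{deffuntwovar}--\ref{moddef}. Since these definitions are manifestly symmetric in their two pseudo double category arguments, swapping the roles of $X$ and $Y$ on every datum produces a strict double functor $[X,[Y,Z]] \lra [Y,[X,Z]]$, and swapping back is an inverse, so $s$ is an isomorphism. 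Naturality in all three variables is immediate from the symmetry of the description.

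Finally, I verify the skew closed axioms of \cite[\S2]{MR3010098}. Each axiom is a diagram of strict double functors built from $L$, $i$, $j$, and $s$, expressing a coherence property of the evaluations, identity insertions, whiskerings, and variable swaps from which these maps are assembled. Since all maps in sight are strict, each axiom reduces to checking equality on objects, vertical morphisms, horizontal morphisms, and cells; using the two- and three-variable descriptions of the relevant hom pseudo double categories, this collapses in every case to a manifest identity of coordinate relabellings and identity insertions. The main obstacle is therefore not conceptual but bookkeeping — four levels of data in each pseudo double category times each axiom — which is exactly the ``rather lengthy affair'' the paper flags and which the two-variable description is designed to tame, reducing each check to an evident symmetry or unit property of multivariable pseudo double functors.
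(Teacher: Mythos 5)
Your choice of unit object is where the argument breaks. You take $I$ to be the \emph{terminal} pseudo double category, and claim that a strict double functor $I \lra A$ ``evidently picks out precisely an object of $A$.'' This is false. In the terminal pseudo double category the unique horizontal morphism is the horizontal identity and satisfies $1^h \cdot 1^h = 1^h$ on the nose, so a \emph{strict} double functor out of it must land on an object $a$ of $A$ for which $1^h_a \cdot 1^h_a = 1^h_a$ holds strictly (and for which the relevant unit/associativity constraints are identities) -- a condition a general object of a pseudo double category does not satisfy, since horizontal composition is only unital up to the coherence isomorphisms. Hence the terminal object does not represent $\mathrm{ob} \colon \psdbl_{\mathrm{s}} \lra \Set$, and your $j \colon I \lra [X,X]$ does not even exist as a strict double functor (the horizontal identity on $\mathrm{id}_X$ is not strictly idempotent in $\underline{\Hom}(X,X)$). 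The paper instead takes $I$ to be the \emph{free} pseudo double category on the singleton set -- equivalently the free bicategory on one object, which has formal composites of the identity $1$-cell as its horizontal morphisms -- and then the representability of $\mathrm{ob}$ holds by definition of freeness. With that correction your $i$ (evaluation at the unique object) and $j$ (the identity pseudo double functor) are exactly the paper's.

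There is also a smaller inaccuracy in your symmetry map $s$: the two-variable data are not ``manifestly symmetric'' on the nose, because the globular cell of Definition \ref{deffuntwovar}(vi) has a definite direction, and interchanging the two variables reverses it. The correct definition (as in the paper) interchanges the data (ii) $\leftrightarrow$ (iii) and (iv) $\leftrightarrow$ (v) but must replace $F(f,g)$ by its \emph{inverse}; since that cell is required to be invertible this is harmless, but as written your swap produces a cell of the wrong orientation. Apart from these two points, your construction of $L$ by whiskering (the paper packages it as the two-variable composition functor, strict in the first variable) and your treatment of the axioms as lengthy but routine coordinate checks match the paper's proof, which likewise leaves those verifications to the reader and to the references.
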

\begin{proof}
A substantial amount of the proof is contained in \cite[\S7]{MR1716779} and \cite[\S2]{MR2303000}. We describe the symmetric skew closed structure; the remaining details are then straightforward.

The unit object $I$ is defined to be the free pseudo double category on the singleton set $1$, and so represents the functor $\mathrm{ob} \colon \psdbl_{\mathrm{s}} \lra \Set$ by definition; as its underlying category is discrete (indeed, it is the terminal category), it is also the free bicategory on $1$, which is described concretely in \cite[\S4.2]{Bourke2015}. The strict double functor $i \colon \underline{\Hom}(I,A) \lra A$ is defined by evaluation at the unique object of $I$. The strict double functor $j \colon I \lra \underline{\Hom}(A,A)$ picks out the identity pseudo double functor on $A$.

The strict double functor $L \colon \underline{\Hom}(B,C) \lra \underline{\Hom}(\underline{\Hom}(A,B),\underline{\Hom}(A,C))$ corresponds to the pseudo double functor of two variables $(\underline{\Hom}(B,C),\underline{\Hom}(A,B)) \lra \underline{\Hom}(A,C)$ defined by horizontal composition, which is strict in the first variable (i.e.\ the pseudo double functors in Definition \ref{deffuntwovar} (iii) are strict).

The symmetry isomorphism $s \colon \underline{\Hom}(A,\underline{\Hom}(B,C)) \lra \underline{\Hom}(B,\underline{\Hom}(A,C))$ sends a pseudo double functor of two variables $F \colon (A,B) \lra C$ to the pseudo double functor of two variables $sF \colon (B,A) \lra C$ whose underlying functor is the composite of the underlying functor of $F$ with the symmetry isomorphism $B_0 \times A_0 \cong A_0 \times B_0$, and whose remaining data (in the terms of Definition \ref{deffuntwovar}) are defined from those of $F$ by the interchanges  (ii) $\leftrightarrow$ (iii), (iv) $\leftrightarrow$ (v),  and where for (vi) we replace $F(f,g)$ with its inverse. The definition of $s$ on vertical transformations, pseudo horizontal transformations, and modifications is given by similar interchanges of the data of Definitions \ref{vertdef}, \ref{hordef}, and \ref{moddef}.
\end{proof}

We define the \emph{symmetric closed multicategory $\sf{PsDbl}$ of pseudo double categories} to be the symmetric closed multicategory of weak morphisms of the symmetric skew closed category $\psdbl_{\mathrm{s}}$ of Proposition \ref{skewprop}. Its multimorphisms are the pseudo double functors of $n$ variables described in Remark \ref{twovarrmk}, and its internal hom objects are the hom pseudo double categories $\underline{\Hom}(A,B)$.
We recover Verity's symmetric multicategory of bicategories as a full sub-multicategory of $\sf{PsDbl}$. Moreover, there is an adjunction of symmetric multicategories 
\begin{equation} \label{appadj}
\cd{
{\sf Bicat} \ar@<1.5ex>[rr]_-{\hdash}^-{} && \ar@<1.5ex>[ll]^-{\mathbf{H}} {\sf PsDbl}
}
\end{equation}
whose left adjoint is the full inclusion and whose right adjoint sends a pseudo double category $A$ to the bicategory $\mathbf{H}A$ defined by discarding the vertical morphisms and non-globular cells of $A$.  (Note that this adjunction arises from an adjunction of symmetric skew closed categories.)

Furthermore, we define the \emph{symmetric multicategory $\sf{Dbl}$ of double categories} to be the (non-full) sub symmetric multicategory of $\sf{PsDbl}$ consisting of the (strict) double categories and the (strict) double functors of $n$ variables, i.e.\ the pseudo double functors of $n$ variables that are strict in each variable. This symmetric multicategory is closed, with internal homs $\underline{\Ps}(A,B)$ given by the full sub double categories of $\underline{\Hom}(A,B)$ on the double functors, and moreover is represented by a symmetric closed monoidal structure on the category $\mathbf{Dbl}$ of double categories and double functors, which generalises Gray's symmetric closed monoidal structure on $\twocat$. The adjunction (\ref{appadj}) restricts to an adjunction between the symmetric multicategory  $\sf{Gray}$ of $2$-categories  represented by the symmetric Gray monoidal structure and the symmetric multicategory $\sf{Dbl}$, which is equivalently an adjunction of symmetric monoidal categories.

To conclude, we address the question of representability of the symmetric multicategories of bicategories and pseudo double categories discussed in \S \ref{intro}. Neither multicategory is representable (as can be seen by the arguments of \cite[\S 1.3]{MR2844536}), however they both admit enrichments to symmetric $2$-multicategories that are \emph{birepresentable}, that is, representable up to equivalence.

We define the symmetric closed $2$-multicategory ${\sf PsDbl}_2$ of pseudo double categories to be the change of base of the canonical self-enrichment of ${\sf PsDbl}$ along the symmetric multifunctor $\mathbf{U} \colon {\sf PsDbl} \lra {\sf Cat}$ that sends a pseudo double category $A$ to its underlying category $\mathbf{U}A = A_0$. (This symmetric multifunctor arises from a symmetric closed functor $\psdbl_{\mathrm{s}} \lra \Cat$.) The full sub-$2$-multicategory of ${\sf PsDbl}_2$ on the bicategories is a symmetric $2$-multicategory ${\sf Bicat}_2$ of bicategories, whose $2$-cells are multivariable icons; note that this $2$-multicategory is not closed, even in the bicategorical up-to-equivalence sense. Let $\Hom_2(A_1,\ldots,A_n;B)$ denote the hom categories of the $2$-multicategory ${\sf PsDbl}_2$.

Note that change of base along the symmetric multifunctor $\mathbf{U} \colon {\sf PsDbl} \lra {\sf Cat}$ defines a  $2$-functor $\mathbf{U}_{\ast} \colon {\sf PsDbl}\text{-}\Cat \lra \twocat$ that sends a ${\sf PsDbl}$-category to its underlying $2$-category. We say that a morphism in a ${\sf PsDbl}$-category is an equivalence if it is an equivalence in the underlying $2$-category. In particular, applied to the ${\sf PsDbl}$-category of pseudo double categories, this is the usual notion of equivalence of pseudo double categories.

\begin{theorem} \label{birep}
For each integer $n \geq 0$ and pseudo double categories $A_1,\ldots,A_n$, there exists a pseudo double functor of $n$ variables $K \colon (A_1,\ldots,A_n) \lra A_1\times \cdots \times A_n$ such that the induced pseudo double functor 
\begin{equation} \label{unidblfun}
\underline{\Hom}(K;1) \colon \underline{\Hom}(A_1 \times \cdots \times A_n,B) \lra \underline{\Hom}(A_1,\ldots,A_n;B)
\end{equation}
is an equivalence of pseudo double categories for each pseudo double category $B$.
\end{theorem}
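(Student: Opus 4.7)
The strategy is to exhibit $K$ explicitly as the canonical ``slot-inclusion'' pseudo double functor of $n$ variables, and then to produce a quasi-inverse to $\underline{\Hom}(K;1)$ via the coherence theory for cubical functors of \cite[\S4.24]{MR0371990} and \cite[\S4.2]{MR1261589}, generalised to pseudo double categories in a manner analogous to the strictification argument of Theorem \ref{3duniprop}.

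First I would define $K$ on underlying categories as the identity $(A_1)_0 \times \cdots \times (A_n)_0 \lra (A_1 \times \cdots \times A_n)_0$. For each index $1 \leq i \leq n$ and tuple $(a_j)_{j \neq i}$ of objects, the $i$-th slot pseudo double functor $K(a_1, \ldots, -, \ldots, a_n) \colon A_i \lra A_1 \times \cdots \times A_n$ sends a horizontal morphism $f_i \colon a_i \lra a_i'$ to the tuple $(1_{a_1}^h, \ldots, f_i, \ldots, 1_{a_n}^h)$ and a cell analogously, with unit and composition constraints assembled from the horizontal unit constraints in the other factors. For $i \neq j$, the interchange $2$-cell $K(f_i, f_j)$ is the canonical invertible globular cell in the cartesian product produced by rearranging horizontal identities past non-identity morphisms; the multivariable cubical axioms for $K$ then reduce to coherence in each factor.

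Second, I would construct a pseudo-inverse $R \colon \underline{\Hom}(A_1, \ldots, A_n; B) \lra \underline{\Hom}(A_1 \times \cdots \times A_n, B)$ to $\underline{\Hom}(K;1)$. For a pseudo double functor of $n$ variables $F$, set $RF = F$ on objects, and on a horizontal morphism $(f_1, \ldots, f_n) \colon (a_1, \ldots, a_n) \lra (a_1', \ldots, a_n')$ in the product, define
\begin{equation*}
RF(f_1, \ldots, f_n) \;=\; F(a_1', \ldots, a_{n-1}', f_n) \cdot\, \cdots\, \cdot F(a_1', f_2, a_3, \ldots, a_n) \cdot F(f_1, a_2, \ldots, a_n)
\end{equation*}
as a chosen lexicographic composite of slot images. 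The pseudo double functor constraints of $RF$ are built from the unit and composition constraints of the slot pseudo double functors of $F$ together with the interchange $2$-cells of $F$, which splice composites of slot morphisms back into the chosen lexicographic order. I would then extend $R$ to vertical transformations, pseudo horizontal transformations, and modifications of $n$ variables by the analogous slot-wise assignments, so that $R$ becomes a pseudo double functor.

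The equivalence data are then produced as follows. The unit pseudo horizontal transformation $\eta_F \colon RF \circ K \lra F$ has identity object components, with its globular naturality cells assembled from the unit constraints of the slot pseudo double functors of $F$. The counit $\epsilon_G \colon G \lra R(G \circ K)$, at a horizontal morphism $(f_1, \ldots, f_n)$ in the product, is the canonical invertible globular cell decomposing $G(f_1, \ldots, f_n)$ as the chosen lexicographic composite of the images $G(1^h, \ldots, f_i, \ldots, 1^h)$, arising from the composition and unit constraints of $G$. Both are pointwise equivalences, exhibiting an adjoint equivalence $R \dashv \underline{\Hom}(K;1)$. The main obstacle is verifying the pseudo double functor axioms for $RF$, especially the associativity pentagon for its composition constraint, which requires a pasting argument combining the multivariable cubical coherence axioms of $F$ with the slot-wise pseudo double functor constraints; this is formally analogous to the classical coherence theorems for cubical functors of $n$ variables, and once established the remaining naturality verifications and triangle identities reduce to standard coherence computations.
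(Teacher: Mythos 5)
Your construction is correct in outline, but in its second half it takes a genuinely different route from the paper. Your $K$ and, at the level of objects, your quasi-inverse $R$ coincide with what the paper does: the lexicographic composite defining $RF(f_1,\ldots,f_n)$ is exactly the paper's pseudo double functor $\overline{F}$ (for $n=2$, $\overline{F}(f,g)=F(b,g)\cdot F(f,c)$), and your comparison $\eta_F$ is the paper's invertible comparison $\sigma\colon F \lra \overline{F}K$ built from unit constraints. The difference is how the two-dimensional content of the statement is obtained. You propose to extend $R$ to vertical transformations, pseudo horizontal transformations and modifications of $n$ variables, to verify that $R$ is a pseudo double functor between the hom pseudo double categories, and then to check naturality of the unit and counit and the triangle identities; this is where almost all of the labour lies, and in your sketch it is only asserted. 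The paper avoids it entirely: it proves only the underlying hom-\emph{category} equivalence $\Hom_2(A\times B,C)\simeq\Hom_2(A,B;C)$ (essential surjectivity via $\overline{F}$, full faithfulness via the unique lift $\overline{\sigma}$ of a vertical transformation), and then upgrades this formally to an equivalence of hom pseudo double categories by writing $\Hom_2(X,\underline{\Hom}(K;1))$ as a composite of currying isomorphisms and instances of that equivalence and invoking the bicategorical Yoneda lemma, with an induction on $n$ reducing everything to the nullary and binary cases. Your approach buys an explicit inverse, uniformly in $n$; the paper's buys a much shorter verification, at the price of being non-explicit about $R$ on higher cells.

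One caveat you should fix if you pursue the direct route: in this paper an equivalence of pseudo double categories means an equivalence in the $2$-category whose $2$-cells are \emph{vertical} transformations, so the unit and counit of your adjoint equivalence must be invertible vertical transformations (with modifications as their cell components when varying along pseudo horizontal transformations). You label $\eta_F\colon RF\circ K \lra F$ a pseudo horizontal transformation; taken literally this would only establish a horizontal notion of equivalence, which is not the statement. The data you describe (identity object components, invertible globular cells assembled from unit constraints) is in fact exactly an invertible vertical transformation with identity components, so the fix is purely one of packaging, but the naturality of $\eta$ and $\epsilon$ in $F$ and $G$ — including compatibility with pseudo horizontal transformations and modifications — still needs to be checked, and for $n\geq 3$ the coherence of the composition constraint of $RF$ does require the extra cubical axioms on triples mentioned in Remark \ref{twovarrmk}, as you anticipate.
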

\begin{proof}
We define the multivariable pseudo double functors $K$ recursively as follows. For $n=0$, define $K \colon () \lra 1$ to be the nullary morphism corresponding to the unique object of the terminal pseudo double category $1$. For $n = 1$, define $K \colon A \lra A$ to be the identity pseudo double functor. For $n = 2$, let $K \colon (A,B) \lra A \times B$ be the pseudo double functor of two variables defined by the following data (numbered as in Definition \ref{deffuntwovar}): 
(i) is the identity on underlying categories, (ii) $K(a,-)$ and (iii) $K(-,b)$ are the composites
\begin{equation*}
\cd[@C=3em]{
B \cong 1 \times B \ar[r]^-{a \times 1_B} & A \times B
}
\qquad
\qquad
\cd[@C=3em]{
A \cong A \times 1 \ar[r]^-{1_A \times b} & A \times B
}
\end{equation*}
respectively, and (iv) $K(u,g)$, (v) $K(f,v)$, and (vi) $K(f,g)$ are the cells in $A \times B$
\begin{equation*}
\cd[@C=1em]{
(a,c) \ar[rr]^-{(1_a^h,g)} \ar[d]_-{(u,1_c^v)} \dtwocell{dr}{(1_u,1_g)} && (a,d) \ar[d]^-{(u,1_d^v)} \\
(b,c) \ar[rr]_-{(1_b^h,g)} && (b,d)
}
\qquad
\cd[@C=1em]{
(a,c) \ar[rr]^-{(f,1_c^h)} \ar[d]_-{(1_a^v,v)} \dtwocell{dr}{(1_f,1_v)} && (b,c) \ar[d]^-{(1_b^v,v)} \\
(a,d) \ar[rr]_-{(f,1_d^h)} && (b,d)
}
\qquad
\cd[@C=1em]{
(a,c) \ar[rr]^-{(1_a^h,g)} \ar[d]_-{(f,1_c^h)} \dtwocell{dr}{(1_f,1_g)} && (a,d) \ar[d]^-{(f,1_d^h)} \\
(b,c) \ar[rr]_-{(1_b^h,g)} && (b,d)
}
\end{equation*}
respectively.
For $n \geq 3$, define $K$ to be the following composite.
\begin{equation*}
\cd{
(A_1,\ldots,A_{n-1},A_n) \ar[r]^-{(K,1)} & (A_1 \times \cdots \times A_{n-1},A_n) \ar[r]^-K & (A_1 \times \cdots \times A_{n-1}) \times A_n \cong A_1 \times \cdots \times A_n
}
\end{equation*}

We first show that the functor 
\begin{equation} \label{ev2}
\cd{
\Hom_2(K;1) \colon \Hom_2(A \times B,C) \lra \Hom_2(A,B;C)
}
\end{equation}
is an equivalence of categories. To prove that it is essentially surjective on objects, let  $F \colon (A,B) \lra C$ be a pseudo double functor of two variables. Define $\overline{F} \colon A \times B \lra C$ to be the pseudo double functor that agrees with $F$ on underlying categories, sends a horizontal morphism $(f,g) \colon (a,c) \lra (b,d)$ in $A \times B$ to the composite
\begin{equation*}
\cd[@C=3em]{
F(a,c) \ar[r]^-{F(f,c)} & F(b,c) \ar[r]^-{F(b,g)} & F(b,d),
}
\end{equation*}
has horizontal unit constraint at the object $(a,c) \in A \times B$ given by the composite of globular cells in $C$
\begin{equation*}
\cd[@C=3em]{
1_{F(a,c)} \ar[r]^-{\cong} & 1_{F(a,c)} \cdot 1_{F(a,c)} \ar[r]^-{\varphi_0\cdot\varphi_0} & F(a,1_c) \cdot F(1_a,c),
}
\end{equation*}
has horizontal composition constraint at the composable pair $(f,g) \colon (a,c) \lra (b,d)$, \linebreak $(h,k) \colon (b,d) \lra (x,y)$ of horizontal morphisms in $A \times B$ given by the following pasting composite of invertible globular cells in $C$,
\begin{equation*}
\cd{
{} & {} & F(b,d) \ar[dr]^-{F(h,d)} \dtwocell{dd}{F(h,g)} & {} & {} \\
{} & F(b,c) \ar[ur]^-{F(b,g)} \dtwocell[0.6]{d}{\varphi} \ar[dr]|-{F(h,c)} & {} & F(x,d) \dtwocell[0.6]{d}{\varphi} \ar[dr]^-{F(x,k)} & {} \\
F(a,c) \ar[ur]^-{F(f,c)} \ar[rr]_-{F(hf,c)} & {} & F(x,c) \ar[rr]_-{F(x,kg)}  \ar[ur]|-{F(x,g)} & {} & F(x,y)
}
\end{equation*}
and sends a cell $(\alpha,\beta)$ in $A \times B$ as below 
\begin{equation*}
\cd[@C=1em]{
(a,c) \ar[d]_-{(u,v)} \ar[rr]^-{(f,g)} &\dtwocell{d}{(\alpha,\beta)} & (b,d) \ar[d]^-{(s,t)} \\
(a',c') \ar[rr]_-{(f',g')} & {}& (b',d')
}
\end{equation*}
to the following pasting composite of cells in $C$.
\begin{equation*}
\cd[@C=2em]{
F(a,c) \ar[rr]^-{F(f,c)} \ar[d]_-{F(u,c)} \dtwocell{drr}{F(\alpha,c)} && F(b,c) \ar[d]^-{F(s,c)} \ar[rr]^-{F(b,g)} & \dtwocell{d}{F(s,g)} & F(b,d) \ar[d]^-{F(s,d)} \\
F(a',c) \ar[d]_-{F(a',v)}  \ar[rr]|-{F(f',c)} &{} \dtwocell{d}{F(f',v)} & F(b',c) \ar[d]^-{F(b',v)} \ar[rr]|-{F(b',g)} & {} \dtwocell{d}{F(b',\beta)} & F(b',d) \ar[d]^-{F(b',t)} \\
F(a',c') \ar[rr]_-{F(f',c')} & {} & F(b',c') \ar[rr]_-{F(b',g')} & {} & F(b',d')
}
\end{equation*}
There exists an invertible vertical transformation of two variables $\sigma \colon F \lra \overline{F}K$ whose underlying natural transformation is the identity, and whose cell components $\sigma(a,g)$ and $\sigma(f,c)$ are given by the following composites.
\begin{equation*}
\cd{
F(a,g)\ar[r]^-{\cong} & F(a,g) \cdot 1_{F(a,c)} \ar[r]^-{1 \cdot \varphi_0} & F(a,g) \cdot F(1_a,c)
}
\end{equation*}
\begin{equation*}
\cd{
F(f,c) \ar[r]^-{\cong} & 1_{F(b,c)} \cdot F(f,c) \ar[r]^-{\varphi_0\cdot 1} & F(b,1_c)\cdot F(f,c)
}
\end{equation*}
Hence the functor (\ref{ev2}) is essentially surjective on objects.

To prove that the functor (\ref{ev2}) is fully faithful, let $F,G \colon A \times B \lra C$ be pseudo double functors, and let $\sigma\colon FK \lra GK$ be a vertical transformation of two variables. Define \linebreak $\overline{\sigma} \colon F \lra G$ to be the vertical transformation that agrees with $\sigma$ on underlying categories, and whose cell component at a horizontal morphism $(f,g) \colon (a,c) \lra (b,d)$ in $A \times B$ is the following composite cell in $C$,
\begin{equation*}
\cd{
F(a,c) \ar[rrrr]^-{F(f,g)} \ar@{=}[d] & {} & {} \ar@{}[d]|-{\vcong} & {} & F(b,d) \ar@{=}[d] \\
F(a,c) \ar[d]_-{\sigma(a,c)} \ar[rr]^-{F(f,1_c)} & {} \dtwocell{d}{\sigma(f,c)} & F(b,c) \ar[d]^-{\sigma(b,c)} \ar[rr]^-{F(1_b,g)} & {} \dtwocell{d}{\sigma(b,g)} & F(b,d) \ar[d]^-{\sigma(b,d)} \\
G(a,c) \ar@{=}[d] \ar[rr]_-{G(f,1_c)} & {} & G(b,c) \ar[rr]_-{G(1_b,g)} \ar@{}[d]|-{\vcong} & {} & G(b,d) \ar@{=}[d]\\
G(a,c) \ar[rrrr]_-{G(f,g)} & {} & {} & {} & G(b,d)
}
\end{equation*}
where the unlabelled isomorphisms are given by the pseudo double functor constraints of $F$ and $G$. This is the unique vertical transformation $\overline{\sigma} \colon FK \lra GK$ such that $\overline{\sigma}K = \sigma$. Hence the functor (\ref{ev2}) is fully faithful, and is therefore an equivalence of categories.

We now prove by induction on $n \geq 0$ that the pseudo double functors (\ref{unidblfun}) are equivalences of pseudo double categories. For $n=0$, and for any pseudo double category $X$, the functor 
\begin{equation*}
\cd{
\Hom_2(X,\underline{\Hom}(K;1)) \colon \Hom_2(X,\underline{\Hom}(1,B)) \lra \Hom_2(X,B)
}
\end{equation*}
is equal to the composite of equivalences 
\begin{align*}
\Hom_2(X,\underline{\Hom}(1,B)) &\cong \Hom_2(X,1;B) \\
&\simeq \Hom_2(X \times 1,B) \\
&\cong \Hom_2(X,B)
\end{align*}
and is therefore an equivalence of categories. Hence, by the bicategorical Yoneda lemma \cite[\S 1.9]{MR574662},  the pseudo double functor $\underline{\Hom}(K;1) \colon \underline{\Hom}(1,B) \lra B$ is an equivalence of pseudo double categories. For $n=1$, the pseudo double functor (\ref{unidblfun}) is an identity and is therefore an equivalence. For $n=2$, and for any pseudo double category $X$, the functor
\begin{equation*}
\cd{
\Hom_2(X,\underline{\Hom}(K;1)) \colon \Hom_2(X,\underline{\Hom}(A\times B,C)) \lra \Hom_2(X,\underline{\Hom}(A,B;C))
}
\end{equation*}
is naturally isomorphic to the composite of equivalences
\begin{align*}
\Hom_2(X,\underline{\Hom}(A\times B,C)) &\cong \Hom_2(X,A \times B;C) \\
&\simeq \Hom_2(X \times (A \times B);C) \\
&\cong \Hom_2((X \times A) \times B;C) \\
&\cong \Hom_2(X \times A,\underline{\Hom}(B,C)) \\
&\simeq \Hom_2(X,A;\underline{\Hom}(B,C)) \\
&\cong \Hom_2(X,\underline{\Hom}(A,B;C))
\end{align*}
and is therefore an equivalence of categories. Hence, by the bicategorical Yoneda lemma,  the pseudo double functor $\underline{\Hom}(K;1) \colon \underline{\Hom}(A\times B,C) \lra \underline{\Hom}(A,B;C)$ is an equivalence of pseudo double categories.

For $n \geq 3$, we have by induction that the pseudo double functor (\ref{unidblfun}) is equal to the composite of equivalences
\begin{align*}
\underline{\Hom}(A_1 \times \cdots \times A_n,B) &\cong \underline{\Hom}((A_1 \times \cdots \times A_{n-1}) \times A_n,B) \\
&\simeq \underline{\Hom}(A_1 \times \cdots \times A_{n-1},A_n;B) \\
&\cong \underline{\Hom}(A_1 \times \cdots \times A_{n-1},\underline{\Hom}(A_n,B)) \\
&\simeq \underline{\Hom}(A_1,\ldots,A_{n-1};\underline{\Hom}(A_n,B)) \\
&\cong \underline{\Hom}(A_1,\ldots,A_n;B)
\end{align*}
and is therefore an equivalence of pseudo double categories.
\end{proof}

For each $n \geq 0$ and  bicategories $A_1,\ldots,A_n$, and $B$, Theorem \ref{birep} establishes equivalences of pseudo double categories 
\begin{equation} \label{bicatequiv}
\underline{\Hom}(A_1 \times \cdots \times A_n,B) \simeq \underline{\Hom}(A_1,\ldots,A_n;B),
\end{equation}
and hence in particular equivalences of categories
\begin{equation*}
\Hom_2(A_1 \times \cdots \times A_n,B) \simeq \Hom_2(A_1,\ldots,A_n;B)
\end{equation*}
between categories of (multivariable) pseudofunctors and icons. Since each pseudo double category $\underline{\Hom}(A_1,\ldots,A_n;B)$ is fibrant (in the sense of 
\cite[Definition 18]{MR2496344}), the equivalences of pseudo double categories (\ref{bicatequiv}) induce biequivalences of bicategories
\begin{equation*}
\Hom(A_1 \times \cdots \times A_n,B) \sim \Hom(A_1,\ldots,A_n;B),
\end{equation*}
and in particular we recover the biequivalences 
\begin{equation*}
\Hom(A \times B,C) \sim \Hom(A,\Hom(B,C)) \qquad \Hom(1,A) \sim A
\end{equation*}
of \cite[\S 1.34]{MR574662}.
\bibliographystyle{alpha}

\end{document}